 \setlist[itemize]{wide = 0pt, labelwidth = 2em, labelsep*=0em, itemindent = 0pt, leftmargin = \dimexpr\labelwidth + \labelsep\relax, noitemsep,topsep = 1ex,}
 \setlist[enumerate]{wide = 0pt, labelwidth = 2em, labelsep*=0em, itemindent = 0pt, leftmargin = \dimexpr\labelwidth + \labelsep\relax, noitemsep,topsep = 1ex}
\theoremstyle{plain}
\newtheorem{thmx}{Theorem}
\renewcommand{\thethmx}{\Alph{thmx}} 
\newtheorem{thm}{Theorem}[section]  
\newtheorem{lem}[thm]{Lemma}
\newtheorem{proposition}[thm]{Proposition}
\theoremstyle{definition}
\newtheorem{dfn}[thm]{Definition}
\theoremstyle{remark}
\numberwithin{equation}{subsection}  
\theoremstyle{plain}
\newlist{thmlist}{enumerate}{1}
\setlist[thmlist]{wide = 0pt, labelwidth = 2em, labelsep*=0em, itemindent = 0pt, leftmargin = \dimexpr\labelwidth + \labelsep\relax, noitemsep,topsep = 1ex, font=\normalfont, label=(\roman*), ref=\thethm.(\roman{thmlisti})}
\newlist{thmenum}{enumerate}{1} 
\setlist[thmenum]{wide = 0pt, labelwidth = 2em, labelsep*=0em, itemindent = 0pt, leftmargin = \dimexpr\labelwidth + \labelsep\relax, noitemsep,topsep = 1ex, font=\normalfont, label=(\roman*), ref=\thethmx.(\roman{thmenumi})}
\newlist{corlist}{enumerate}{1} 
\setlist[corlist]{wide = 0pt, labelwidth = 2em, labelsep*=0em, itemindent = 0pt, leftmargin = \dimexpr\labelwidth + \labelsep\relax, noitemsep,topsep = 1ex, font=\normalfont, label=(\roman*), ref=\thecorx.(\roman{corlisti})}
\crefname{lem}{Lemma}{Lemmas} 
\crefname{conjecture}{Conjecture}{Conjectures}
\crefname{thm}{Theorem}{Theorems}
\crefname{proposition}{Proposition}{Propositions}
\crefname{dfn}{Definition}{Definitions}
\crefname{rem}{Remark}{Remarks}
\crefname{cor}{Corollary}{Corollaries}
\crefname{corx}{Corollary}{Corollaries}
\crefname{problem}{Problem}{Problems}
\crefname{thmx}{Theorem}{Theorems}
\crefname{claim}{Claim}{Claims}
\crefname{assumption}{Assumption}{Assumptions}
\crefname{main}{Main Theorem}{Main Theorems}
\def\ep{\varepsilon}
\def\Ad{{\rm Ad}\,}
\newcommand*{\rom}[1]{\expandafter\@slowromancap\romannumeral #1@}
\newcommand{\crefnames}[3]{%
	\@for\next:=#1\do{%
		\expandafter\crefname\expandafter{\next}{#2}{#3}%
	}%
}
\newcommand{\sC}{\mathscr{C}}
\newcommand{\sD}{\mathscr{D}}
\newcommand{\cO}{\mathcal O}
\newcommand{\cP}{\mathcal P}
\newcommand{\cV}{\mathcal V}
\newcommand{\bC}{\mathbb{C}}
\newcommand{\bD}{\mathbb{D}}
\newcommand{\bE}{\mathbb{E}}
\newcommand{\bH}{\mathbb{H}}
\newcommand{\bR}{\mathbb{R}}
\newcommand{\bZ}{\mathbb{Z}}
\newcommand{\kg}{\mathfrak{g}}
\def\db{\bar{\partial}}
 \def\d{\partial} 
\def\hess{i\partial\bar{\partial}}
\def\End{{\rm \small  End}}
\newcommand{\pae}{\mathcal{P}_{\!\bm{\alpha}}E}
 \newcommand{\pa}{\mathcal{P}_{\!\bm{\alpha}}}
  \newcommand{\ide}{{\rm Id}}
\begin{document} 
\title[Nilpotent orbit theorem]{On the nilpotent orbit theorem of \\ complex variations of Hodge structure} 


\author{Ya Deng}  

\address{CNRS, Institut \'Elie Cartan de Lorraine, Universit\'e de Lorraine, 54000 Nancy,
	France.}

\email{ya.deng@math.cnrs.fr} 

\urladdr{https://ydeng.perso.math.cnrs.fr} 
\keywords{Nilpotent orbit theorem, acceptable bundle, parabolic bundle,  $L^2$-estimate}
\subjclass{14D07, 	14C30}
\begin{abstract}  
	We prove  some results on   the nilpotent orbit theorem for complex variations of Hodge structure. 
\end{abstract}  

	\maketitle
\tableofcontents	
	\section{Introduction} 
The nilpotent and ${\rm SL}_2$-orbit theorems of Schmid have been fundamental in understanding the degeneration of Hodge structure, particularly in the context of integral variation of Hodge structures. However, their complete generalization to complex variations of Hodge structure remains unproven. This paper aims to  the study of Schmid's nilpotent orbit theorem for complex variations of Hodge structure. The main result of this paper is the main component of the nilpotent orbit theorem.
	\begin{thmx}\label{main}
		Let $X$ be a complex manifold and let $D=\sum_{i=1}^\ell D_i$ be a simple normal crossing divisor on $X$.  Let $(V, \nabla, F^\bullet, Q)$ be a complex polarized variation of Hodge structure  on $X\backslash D$. Then   for any multi-index $\bm{\alpha}=(\alpha_1,\ldots,\alpha_\ell)\in \bR^\ell$,  $F^p_{\bm{\alpha}}:=j_*F^p\cap V_{\bm{\alpha}}^{\rm Del}$ and $F^{p}_{\bm{\alpha}}/F^{p+1}_{\bm{\alpha}}$ are both locally free sheaves. Here  $V_{\bm{\alpha}}^{\rm Del}$  is  the  Deligne extension of the flat bundle  $(V,\nabla)$ with the eigenvalues of the residue of $\nabla$ over $D_i$ lying in $[-\alpha_i,-\alpha_i+1)$. 
	\end{thmx}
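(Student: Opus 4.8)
The plan is to view this as a form of Schmid's nilpotent orbit theorem and reduce it to two inputs: the local freeness of the norm‑growth prolongation of an acceptable bundle, and the Hodge‑norm estimate identifying the Deligne lattice with such a prolongation. Since local freeness is local on $X$, I may assume $X=\Delta^n$ and $D=\{z_1\cdots z_k=0\}$, and set $X^\ast=X\setminus D$ with its Poincar\'e‑type complete K\"ahler metric $\omega_{\rm P}$. On $X^\ast$ the Hodge metric $h$ of the polarized complex VHS makes $(V,\nabla)$ a tame harmonic bundle: writing $E=\bigoplus_pE^p$ with $E^p=F^p/F^{p+1}$, the holomorphic structure $\db_V=\nabla^{0,1}$ is $\db_E+\theta^\dagger$, where $\db_E$ is block‑diagonal for the grading, $\theta\colon E^p\to E^{p-1}\otimes\Omega^1_{X^\ast}$ is the graded Higgs field coming from Griffiths transversality, and $\theta^\dagger$ is its $h$‑adjoint; moreover $h=\bigoplus_ph_p$ is block‑diagonal, each $F^p$ is a $\db_V$‑holomorphic subbundle, and $h$ induces on $F^p$ (resp. on the quotient $E^p$) exactly $h|_{F^p}$ (resp. $h_p$). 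The tameness (indeed the parabolic weights are real, the residues of $\nabla$ having real eigenvalues, as for any polarized complex VHS) gives $|\theta|_{h,\omega_{\rm P}}$ bounded.

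\textbf{Step 1 (acceptability).} I would first show that $(F^p,h|_{F^p})$ and $(E^p,h_p)$ are \emph{acceptable} in Simpson's sense with respect to $\omega_{\rm P}$, i.e.\ have Chern curvature of bounded norm. For $(E^p,h_p)$ this is immediate: its Chern connection is the block‑diagonal part of the Chern connection of $(E,\db_E,h)$, whose curvature is $-[\theta,\theta^\dagger]$ by Hitchin's equations, hence bounded. For $F^p$ one combines this with the Gauss--Codazzi formula: the second fundamental form of $F^p\hookrightarrow(V,\db_V,h)$ is (the $p$‑th graded piece of) $-\theta$, hence bounded, so $|F(h|_{F^p})|_{h,\omega_{\rm P}}$ is bounded too.

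\textbf{Step 2 (local freeness of the prolongation).} For an acceptable bundle $(W,h_W)$ on $X^\ast$ and $\bm\beta\in\bR^\ell$, let $\cP_{\bm\beta}(W,h_W)$ be the $\cO_X$‑module of holomorphic sections $s$ over punctured neighbourhoods with $|s|_{h_W}=O\!\big(\prod_i|z_i|^{-\beta_i-\ep}\big)$ for every $\ep>0$. The technical core is to prove that each $\cP_{\bm\beta}(W,h_W)$ is locally free, and that the family $\{\cP_{\bm b}(W,h_W)\}_{\bm b}$ is a parabolic bundle. This is where the $L^2$‑method enters: away from $D$ there is nothing to do, in codimension one one invokes the curve case (Simpson), and along the deep strata $\bigcap_{i\in I}D_i$ one produces a local holomorphic frame by extending a frame defined on a nearby lower‑codimension stratum, the extension being obtained by solving $\db$ with H\"ormander/Donnelly--Fefferman estimates for the complete metric $\omega_{\rm P}$ and a weight adapted to $\bm\beta$; the curvature bound of Step 1 is exactly what makes these estimates available.

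\textbf{Step 3 (identification and conclusion).} The Hodge‑norm estimate gives $V^{\rm Del}_{\bm\alpha}=\cP_{\bm\alpha}(V,h)$ — Schmid and Cattani--Kaplan--Schmid for integral VHS, and Mochizuki's asymptotic analysis of tame harmonic bundles in general. Since the Hodge norm of a section of $F^p$ computed in $(V,h)$ equals its norm in $(F^p,h|_{F^p})$, one gets
\[
F^p_{\bm\alpha}=j_*F^p\cap V^{\rm Del}_{\bm\alpha}=j_*F^p\cap\cP_{\bm\alpha}(V,h)=\cP_{\bm\alpha}(F^p,h|_{F^p}),
\]
which is locally free (and in particular coherent) by Step 2. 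For the graded quotient, writing a local section of $F^p_{\bm\alpha}$ as $s=\sum_{p'\ge p}s_{p'}$ with $s_{p'}$ a smooth section of $E^{p'}$, the equation $\db_Vs=0$ reads $\db_Es_p=0$ and $\db_Es_{p'}=-\theta^\dagger s_{p'-1}$ for $p'>p$, while $|s|_h^2=\sum_{p'}|s_{p'}|_{h_{p'}}^2$; hence $s\mapsto s_p$ is a morphism $F^p_{\bm\alpha}\to\cP_{\bm\alpha}(E^p,h_p)$ with kernel $F^{p+1}_{\bm\alpha}$, and its surjectivity amounts to solving this triangular $\db$‑system with norm control — once more an $L^2$‑estimate of the type in Step 2. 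Therefore $F^p_{\bm\alpha}/F^{p+1}_{\bm\alpha}\cong\cP_{\bm\alpha}(E^p,h_p)$ is locally free. I expect Step 2 — local freeness of the norm‑growth prolongation of an acceptable bundle along the codimension‑$\ge2$ strata, for \emph{every} weight — to be the main obstacle: it is the genuinely several‑variable phenomenon, and the reason the $L^2$‑approach is used in place of the $\mathrm{SL}_2$‑orbit theorem, which is not available for complex VHS. The Hodge‑norm estimate of Step 3 is the other substantial ingredient; to avoid citing it one could instead establish the two inclusions $V^{\rm Del}_{\bm\alpha}\subseteq\cP_{\bm\alpha}(V,h)$ and $\supseteq$ directly from the limiting mixed Hodge structure, which again reduces to $L^2$‑estimates.
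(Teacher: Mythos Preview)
Your proposal is correct and follows the same architecture as the paper: acceptability of $(F^p,h|_{F^p})$ and $(E^p,h_p)$ (the paper's Lemma~1.5), local freeness of the norm-growth prolongation of acceptable bundles, the identification $F^p_{\bm\alpha}=\cP_{\bm\alpha}(F^p,h|_{F^p})$ via $V^{\rm Del}_{\bm\alpha}=\cP_{\bm\alpha}(V,h)$, and surjectivity of $\cP_{\bm\alpha}F^p\to\cP_{\bm\alpha}E^{p,m-p}$ by solving a $\db$-equation with norm control.

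The only point to flag is your location of the main obstacle. The paper \emph{cites} your Step~2 wholesale from Mochizuki's work on acceptable bundles (Theorem~1.6 there; his proof is not $L^2$-theoretic), and the paper's own $L^2$-estimates go entirely into the surjectivity in your Step~3: one solves $\db_{F^{p+1}}\sigma=-\theta_{p+1,m-p-1}^\dagger s$ via H\"ormander--Demailly on the Poincar\'e punctured polydisc (the paper's Lemmas~2.1 and~2.2), exploiting the semicontinuity of the parabolic filtration to absorb the loss in the weight. Conversely, the identification $V^{\rm Del}_{\bm\alpha}=\cP_{\bm\alpha}(V,h)$ is \emph{not} simply cited: the paper gives its own proof (Proposition~2.2), combining Mochizuki's weak norm estimate for general harmonic bundles with the one-variable Hodge-norm estimate to get $V^{\rm Del}_{\bm\alpha}\subset\cP_{\bm\alpha}V$, and then running the same argument on the dual VHS to obtain the reverse inclusion --- essentially the alternative you sketch at the end of your Step~3 rather than a citation.
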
 
We prove moreover that the grading $\oplus_{p+q=m}F^{p}_{\bm{\alpha}}/F^{p+1}_{\bm{\alpha}}$ is naturally identified with $\oplus_{p+q=m}\pa E^{p,q}$ where $\pa E^{p,q}$ is the  prolongation of the Hodge bundles $ E^{p,q}:=F^{p}/F^{p+1}$ in terms of the norm growth of the Hodge metric (see \cref{sec:pro} for the definition).

Based on \cref{main}, we can generalize  main parts of Schmid's nilpotent orbit theorem to complex polarized variation of Hodge structure.
\begin{thmx}\label{corx}
	Let $(V,\nabla,F^\bullet,Q)$ be a complex polarized variation of Hodge structure on $(\Delta^*)^p\times \Delta^q$. Denote by $\Phi:\bH^{p}\times \Delta^q\to \sD$ its period mapping, where $\sD$ is the period domain and  $\bH=\{z\in \bC\mid \Re z<0\}$. Let us denote by $2\pi iR_i$ is the logarithm of the monodromy operator associated to the counter-clockwise generator of the
	fundamental group of the $i$-th copy of $\Delta^*$ 	in $(\Delta^*)^p$,  whose eigenvalues lie in $(2\pi i (\alpha_i-1), 2\pi i \alpha_i]$ for some $\bm{\alpha}\in \bR^{p}$.  Then for the holomorphic mapping $\Psi:(\Delta^*)^p\times \Delta^q\to \check{\sD}$ induced by $\tilde{\Psi}:= \exp(\sum_{i=1}^{p}z_iR_i)\circ \Phi(z,w)$, 
	\begin{thmenum}
		\item  \label{extension}$\Psi$ extends holomorphically to $\Delta^{p+q}$;
	 	\item \label{horizontal}the holomorphic mapping \begin{align*}
	 \vartheta:	\bH^{p}\times \Delta^q&\to \check{\sD}\\
	 		 (z,w)&\mapsto \exp(-\sum_{i=1}^{p}z_iR_i)\circ a(w) 
 		 \end{align*} is horizontal, where $a(w):=\Psi(0,w)$, and $\check{\sD}$ is the compact dual of the period domain $\sD$. 
	 	\item \label{one var}In the one variable case,  $  \exp(-zR)\circ a$  lies in $\sD$ when $\Re z\leq -C$ for some $C>0$. Moreover, we have the distance estimate
	 	$$
	 	d_{ {\sD}}( \exp(-zR)\circ a, \Phi(z))\leq C' |\Re z|^{\beta}e^{\delta \Re z} \quad\mbox{for some}\quad C',\delta,\beta>0
	 	$$
if  $\Re z\leq -C$.
		\end{thmenum}
\end{thmx}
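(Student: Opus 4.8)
The plan is to derive all three assertions from \cref{main} and from the refinement stated after it, which identifies $\bigoplus_{p+q=m}F^p_{\bm{\alpha}}/F^{p+1}_{\bm{\alpha}}$ with $\bigoplus_{p+q=m}\pa E^{p,q}$. I work on $X:=\Delta^{p+q}$ with $D$ the union of the first $p$ coordinate hyperplanes, so that $X\setminus D=(\Delta^*)^p\times\Delta^q$. Fix a multivalued flat frame $(v_k)$ of $(V,\nabla)$ and let $(\tilde v_k)$ be the holomorphic frame of $V_{\bm{\alpha}}^{\rm Del}$ corresponding — via the normalization $2\pi iR_i=\log(\text{monodromy})$ — to the twist defining $\tilde\Psi$; then by construction $\Psi(t,w)$ is the point of $\check{\sD}$ classifying the Hodge filtration $F^\bullet$ written in the frame $(\tilde v_k)$, so $\Psi$ is already single-valued on $X\setminus D$ and \cref{extension} amounts to its holomorphic extension across $D$. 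By \cref{main} each $F^p_{\bm{\alpha}}$ and each $F^p_{\bm{\alpha}}/F^{p+1}_{\bm{\alpha}}$ is locally free, hence $V_{\bm{\alpha}}^{\rm Del}/F^p_{\bm{\alpha}}$ is locally free (being filtered by the lower graded pieces) and each $F^p_{\bm{\alpha}}$ is a holomorphic subbundle of $V_{\bm{\alpha}}^{\rm Del}$; reading it off in the frame $(\tilde v_k)$ produces a holomorphic map from $X$ to a product of Grassmannians extending $\Psi$, and since the incidence relations $F^{p+1}\subset F^p$ and the first Hodge--Riemann bilinear relations cut out $\check{\sD}$ by closed conditions that already hold over $X\setminus D$, the extension lands in $\check{\sD}$.

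For \cref{horizontal}: local freeness of the quotients makes $F^{p-1}_{\bm{\alpha}}\otimes\Omega^1_X(\log D)$ a subbundle, hence torsion-free, of $V_{\bm{\alpha}}^{\rm Del}\otimes\Omega^1_X(\log D)$; since Griffiths transversality $\nabla F^p\subset F^{p-1}\otimes\Omega^1$ holds off $D$, the composite of $\nabla\colon F^p_{\bm{\alpha}}\to V_{\bm{\alpha}}^{\rm Del}\otimes\Omega^1_X(\log D)$ with the projection onto the torsion-free quotient by $F^{p-1}_{\bm{\alpha}}\otimes\Omega^1_X(\log D)$ vanishes over $X\setminus D$, hence vanishes, so $\nabla F^p_{\bm{\alpha}}\subset F^{p-1}_{\bm{\alpha}}\otimes\Omega^1_X(\log D)$ on all of $X$. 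Taking residues along $D_i$ (the residue of the Deligne connection being $-R_i$) gives $R_iF^p_{\bm{\alpha}}|_{D_i}\subset F^{p-1}_{\bm{\alpha}}|_{D_i}$ for all $p$, i.e.\ $R_i$ shifts the limiting filtration $F^\bullet(a(w))$ down by at most one step. Now the horizontal distribution on $\check{\sD}=\check G/P$ is $\check G$-invariant (since $P$ preserves, at the base point, the subspace $\{\xi\in\kg:\xi F^p\subset F^{p-1}\ \forall p\}$), the $R_i$ pairwise commute, and $\Psi$ — horizontal over $X\setminus D$, hence everywhere by the same closedness argument — is horizontal in the $w$-directions. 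Since $\vartheta(z,w)=\exp(-\sum_i z_iR_i)\cdot a(w)$, its $\partial_{z_i}$-derivative is the value at $\vartheta(z,w)$ of the fundamental vector field of $-R_i$, and its $\partial_{w_j}$-derivative is $\exp(-\sum_i z_iR_i)_*$ applied to $\partial_{w_j}a(w)$; by the three facts just recorded both lie in the horizontal distribution, so $\vartheta$ is horizontal.

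For the one-variable assertion ($p=1$), write $g(z):=\exp(-zR)$ and $t:=e^z$, so that $\Phi(z)=g(z)\cdot\Psi(t)$ and $\vartheta(z)=g(z)\cdot a$ with $a=\Psi(0)=\lim_{t\to0}\Psi(t)$, whence $\Phi(z)-\vartheta(z)=g(z)\cdot(\Psi(t)-a)$. Holomorphic extension of $\Psi$ gives $\Psi(t)-a=O(|t|)$, while in suitable local affine coordinates on $\check{\sD}$ the action of $g(z)$ is built from factors $e^{-z\mu}$ — where $\mu$ is a difference of two eigenvalues of $R$, so $|\mu|<1$ since the eigenvalues of $R$ all lie in $(\alpha-1,\alpha]$ — times polynomials in $z$ from the nilpotent part of $R$; this yields $|\Phi(z)-\vartheta(z)|\le C|\Re z|^{\beta}e^{\delta\Re z}$ in such coordinates, for some $\beta,\delta>0$. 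What remains — the intrinsic distance in $\sD$, and $\vartheta(z)\in\sD$ for $\Re z\le-C$ — is where I would use the $\pa E^{p,q}$-identification: it says precisely that the Hodge metric of $V$ degenerates with the model norm growth prescribed by $R$, which pins down the rate at which $\Phi(z)$ approaches $\partial\sD$ and realizes the Hodge form of $\vartheta(z)$ as a perturbation of the positive Hodge form of $\Phi(z)$ small relative to that rate; this forces $\vartheta(z)\in\sD$ for $\Re z\ll0$, bounds the $\sD$-metric along the segment from $\vartheta(z)$ to $\Phi(z)$, and upgrades the coordinate bound to $d_{\sD}(\exp(-zR)\circ a,\Phi(z))\le C'|\Re z|^{\beta}e^{\delta\Re z}$.

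I expect \cref{extension} and \cref{horizontal} to be essentially formal once \cref{main} is available: they unwind ``locally free $\Rightarrow$ holomorphic subbundle $\Rightarrow$ holomorphic, resp.\ horizontal, map''. The real obstacle is the distance estimate in the one-variable case — the coordinate bound above is elementary, but converting it into the intrinsic $\sD$-distance and establishing $\vartheta(z)\in\sD$ re-invoke the analytic heart behind \cref{main} (the $L^2$-estimates and the acceptability of the Hodge metric) and must cope with a non-unipotent, merely unitarizable monodromy that has no counterpart in Schmid's integral setting. For that step I would follow Schmid's original route, which precedes and is logically independent of the $\mathrm{SL}_2$-orbit theorem: view $\vartheta$ as an approximate solution of the Griffiths-transversality equation satisfied by $\Phi$ and estimate the deviation with the above norm bounds, the new ingredient being a systematic use of the parabolic prolongation to absorb the semisimple part of the residue.
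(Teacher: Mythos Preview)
Your arguments for \cref{extension} and \cref{horizontal} are correct. For \cref{extension} you do exactly what the paper does. For \cref{horizontal} you take a different route: you extend Griffiths transversality across $D$ by a torsion-freeness argument and then read off horizontality of $R_i$ at $a(w)$ as a residue statement, whereas the paper obtains $[R_i]_{a(w)}\in T^{-1,1}_{\check{\sD},a(w)}$ by a limiting argument, computing $\tilde\Psi_*(\partial/\partial z_i)$ and letting $\Re z_i\to-\infty$. Your approach is cleaner and more algebro-geometric; the paper's is softer but avoids checking that the quotient $V_{\bm{\alpha}}^{\rm Del}\otimes\Omega^1_X(\log D)\big/F^{p-1}_{\bm{\alpha}}\otimes\Omega^1_X(\log D)$ is torsion-free. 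Both are fine, and both then finish via the $\check G$-invariance of the horizontal distribution in the same way.

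For \cref{one var} your coordinate bound $|\Phi(z)-\vartheta(z)|\le C|\Re z|^\beta e^{\delta\Re z}$ is right, and the reason $\delta>0$ (the spread of eigenvalues of $R$ is $<1$) is the same as in the paper. But the passage from this to the intrinsic $d_{\sD}$-estimate and to $\vartheta(z)\in\sD$ is where your proposal has a gap. Your plan to use the $\pa E^{p,q}$-identification to control ``the rate at which $\Phi(z)$ approaches $\partial\sD$'' is not how the paper proceeds, and as written it is not a proof: you have not explained how the norm growth of the Hodge metric controls the local comparison between $d_{\sD}$ and the ambient distance on $\check{\sD}$ near $\Phi(z)$. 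The paper instead follows Schmid concretely: choose $g(z)\in G=U(V^\nabla,Q)$ with $g(z)\cdot o=\Phi(z)$, use the general bound $d_{\check{\sD}}(ga,gb)\le\lVert\mathrm{Ad}\,g\rVert\,d_{\check{\sD}}(a,b)$, and invoke Schmid's estimate $\lVert\mathrm{Ad}\,g(z)^{-1}\rVert\le C|\Re z|^\beta$ (valid for complex polarized VHS as well) to pull both $\Phi(z)$ and $\vartheta(z)$ into a fixed small neighborhood of $o$ in $\sD$ where $d_{\sD}$ and $d_{\check{\sD}}$ are mutually bounded. That is what forces $\vartheta(z)\in\sD$ for $\Re z\ll0$ and converts the $\check{\sD}$-estimate into a $d_{\sD}$-estimate; the $G$-invariance of $d_{\sD}$ and periodicity under $T$ then remove the restriction on $\Im z$. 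The ``systematic use of the parabolic prolongation to absorb the semisimple part'' that you propose is not needed here---the semisimple part is already handled by the elementary eigenvalue-spread bound---and does not substitute for the missing $\lVert\mathrm{Ad}\,g(z)^{-1}\rVert$ estimate.
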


When $(V,\nabla) $ has quasi-unipotent monodromies around $D$, \cref{main,corx} are contained in   Schmid's nilpotent orbit theorem \cite{Sch73}. Under this monodromy assumption he   proved \cref{one var} for the case  of several variables.  

\cref{main,corx} were also proved by Sabbah and Schnell  \cite{SS22} for the case  of one variable in a different way. Their  methods can be extended to prove the general case. 

Our proof of \cref{main} is based on Mochizuki's work on the prolongation of acceptable bundles \cite{Moc11} and methods in $L^2$-estimates.   The proof of \cref{horizontal,one var} essentially follows Schmid's method in \cite{Sch73}.

\medspace

We conclude the introduction by explaining  applications of the main result in this paper. One application is related to the work \cite{Wu22} on the  injectivity and vanishing theorem for $\bR$-Hodge
modules. We remark that in \cite[Lemma 2.10]{Wu22}, nilpotent orbit theorem for real variation of Hodge structures was claimed. It seems to the author that the proof needs some amplification, and as such, \cref{main} complements the proof presented therein.

Another  potential further application is on the complex Hodge modules.   It is well-established that Saito's theory of mixed Hodge modules fundamentally relies on the nilpotent orbit theorem. Currently,  Sabbah and Schnell are  developing the theory of mixed Hodge modules for complex Hodge structures. We remark that the nilpotent orbit theorem established in this paper should serve as a foundational component in their work.

	\section{Preliminary}
	\subsection{Complex polarized variations of Hodge structure}\label{sec:VHS}
Let us briefly recall the definition of \emph{polarized Hodge structure}. We refer the readers to \cite{Sim92,SS22} for more details. A Hodge structure of weight $w$ on a complex vector space $V$ is a decomposition
	$$
	V=\bigoplus_{p+q=w} V^{p, q}
	$$
	and a \emph{polarizatio}n is a hermitian pairing $Q: V \otimes_{\mathbb{C}} \bar{V} \rightarrow \mathbb{C}$ such that the above decomposition is orthogonal with respect to $Q$, and such that $(-1)^q Q$ is positive definite on the subspace 
	$V^{p, q}$.   For any $v\in V$, its \emph{Hodge norm}   is defined 
	$$
|v|^2=\sum_{p+q=w}(-1)^q Q\left(v^{p, q}, v^{p, q}\right),
	$$
	where $v^{p,q}$ is the $(p,q)$-component of $v$ in the Hodge decomposition $V=\bigoplus_{p+q=w} V^{p, q}$. 
Such a polarized Hodge structure induces the  \emph{Hodge filtration} $F^\bullet V$ of $V$ defined by
	$$
 F^p V=\bigoplus_{i \geq p} V^{i, w-i}.
	$$

As introduced by \cite{Sim92}, a complex polarized variation of Hodge structure    $(V=\bigoplus_{r+s=w} V^{r, s}, \nabla,  Q)$ of weight $w$ on a complex manifold $U$ consists of the following data:    
  \begin{enumerate}[label=(\alph*)]
	\item  a smooth vector bundle $V$ with a Hodge decomposition $V=\bigoplus_{r+s=w} V^{r, s}$;
	\item a flat connection $\nabla$   satisfies the Griffiths  transversality condition
\begin{align}\label{eq:GT}
\nabla:  {V}^{r, s} \rightarrow A^{0,1}\left(V^{r+1, s+1}\right) \oplus  A^{1,0}\left(V^{r, s}\right) \oplus A^{0,1}\left(V^{r, s}\right) \oplus {A}^{1,0}\left(V^{r-1, s+1}\right)
\end{align}
	\item\label{item:para}  a parallel Hermitian form $Q$ which makes the Hodge decomposition orthogonal and which on $\mathrm{V}^{r, s}$ is positive definite if $r$ is even and negative definite if $r$ is odd. 
\end{enumerate}
We decompose $\nabla=\bar{\theta}+\partial+\bar{\partial}+\theta$ according to the above transversality condition in \eqref{eq:GT}. The most important component of the connection turns out to
be the \emph{Higgs field}, which is the linear operator
$$
\theta: V^{r,s}\to A^{1,0}(V^{r-1,s+1}).
$$

We decompose  $\nabla=\nabla'+\nabla''$ into its (1,0)-component $\nabla': A^0(V) \rightarrow A^{1,0}(V)$ and its $(0,1)$-component $\nabla'': A^0(V) \rightarrow A^{0,1}(V)$. Then $\nabla''$ gives $V$ the structure of a holomorphic vector bundle, which we denote by the symbol $\cV$; and $\nabla'$ defines an integrable holomorphic connection $\nabla': \cV \rightarrow \Omega_U^1 \otimes_{\mathscr{O}_U} \cV$ on this bundle. The condition on $\nabla$ in \eqref{eq:GT} is saying that the Hodge filtration
$$
F^p V=\bigoplus_{i \geq p} V^{i, w-i}
$$
is a  holomorphic subbundles of $\cV$, and that the connection $\nabla'$ satisfies Griffiths' transversality relation $$\nabla'\left(F^p \cV\right) \subseteq \Omega_U^1 \otimes_{\mathscr{O}_U} F^{p-1} \cV.$$
Note that $(V^{p,q},\bar{\d})$ defines a holomorphic vector bundle, denoted by $E^{p,q}$.  
From the above point of view, the Higgs field is simply the holomorphic operator
$$
\theta: E^{p,q} \rightarrow \Omega_U^1 \otimes_{\mathscr{O}_U}  E^{p-1,q+1}. 
$$ 
  Denote by $(E,\theta)=(\oplus_{p+q=w}E^{p,q}, \theta)$, which is called the \emph{system of Hodge bundles} relative to the $\bC$-VHS $(V=\oplus_{p+q=w}V^{p,q},\nabla,Q)$.  Let us define $h_{p,q}:=(-1)^pQ$, which is a hermitian metric for $E^{p,q}$ by \Cref{item:para}.     For the hermitian metric $h=\oplus_{p+q=w}h_{p,q}$ of $E$, the component $\bar{\theta}:V^{r,s}\to A^{0,1}(V^{r+1,s-1})$  is the  adjoint of $\theta$  with respect to $h$.    Since the primary objective of this paper is to establish the nilpotent orbit theorem for $\bC$-VHS, we will adopt the notation $(V, \nabla, F^\bullet V, Q)$ to represent the $\bC$-VHS, as the Hodge filtration $F^\bullet V$ plays a central role in the nilpotent orbit theorem.

	\subsection{Deligne extension}\label{sec:Del}
		Let $X$ be a  complex manifold and let $D$ be a simple normal crossing divisor on $X$.  For the flat bundle   $(V, \nabla)$ defined on $U:=X\backslash D$, Deligne introduced a way to extend it across $D$. We recall this construction briefly and refer the readers to \cite{Sch73,Moc07,SS22} for more details.  For any point $x\in D$, we choose an admissible coordinate $(\Omega;z_1,\ldots,z_n)$ such that $\Omega\simeq \Delta^{n}$ and $D\cap \Omega=(z_1\ldots z_p=0)$ (see \cref{def:admissible} for the definition).  Write $q=n-p$. 
		The fundamental group $\pi_1\big((\Delta^*)^p\times \Delta^q\big)$ is generated by elements
	$\gamma_1,\ldots,\gamma_p$, where $\gamma_j$ may be identified with the counter-clockwise generator of the
	fundamental group of the $j$-th copy of $\Delta^*$
	in $(\Delta^*)^p$. We denote by $V^\nabla$ the space of multivalued flat sections of $(V,\nabla)$, which is a finite dimensional $\bC$-vector space. Set $T_j$ to be the 
	monodromy transformation with respect to $\gamma_j$, which  pairwise commute and are endomorphisms of $V^\nabla$; that is, for any multivalued section $v(t_1,\ldots,t_{p+q})\in V^\nabla$, one has
	$$
	v(t_1,\ldots,e^{2\pi i}t_j,\ldots,t_{p+q})=(T_j v)(t_1,\ldots,t_{p+q})
	$$
	and $[T_j,T_k]=0$ for any $j,k=1,\ldots,p$.  Let us write $Sp(T_j)$ the set of   eigenvalues of $T_j$, and for any $\lambda_j\in Sp(T_j)$, we denote by $\bE(T_j,\lambda_j)\subset V^\nabla$ the corresponding  eigenspace.  We know that all $\lambda_j\in Sp(T_j)$ has norm $1$ (see \emph{e.g.} \cite{SS22}).   Write $Sp:=\prod_{i=1}^pSp(T_j)$. For $\bm{\lambda}=(\lambda_1,\ldots,\lambda_p)$, we define
	$$
	\bE_{\bm{\lambda}}:=\cap_{j=1}^{p}\bE(T_j,\lambda_j).
	$$
		Since $T_j$   pairwise commute,  one has
		$$
		V^\nabla=\oplus_{\bm{\lambda}\in Sp} \bE_{\bm{\lambda}},
		$$
	and $\bE_{\bm{\lambda}}$ is an invariant subspace of $T_j$ for any $\bm{\lambda}\in Sp$ and any $j$.
	
	Let us fix a $p$-tuple $\bm{\alpha}:=(\alpha_1,\ldots,\alpha_p)\in \bR^p$. Then for $\bm{\lambda}\in Sp$, there exists a unique $\beta_i\in (\alpha_i-1, \alpha_i]$ such that $\exp(2\pi i \beta_i)=\lambda_i$.  Since $\lambda_i^{-1}T_i|_{ \bE_{\bm{\lambda}}}$ is unipotent, its logarithm can be defined as
	$$
\log(	\lambda_i^{-1}T_i|_{ \bE_{\bm{\lambda}}}):=\sum_{k=1}^{\infty}(-1)^{k+1}\frac{(\lambda_i^{-1}T_i|_{ \bE_{\bm{\lambda}}}-I)^k}{k}.
	$$
	We denote  $N_i:=\frac{\log(	\lambda_i^{-1}T_i|_{ \bE_{\bm{\lambda}}})}{2\pi i}$.  Then for any $v\in \bE_{\bm{\lambda}}$, we define 
\begin{align}\label{eq:twist}
\tilde{v}(t):=\exp\big(- \sum_{i=1}^{p}( \beta_iI+N_i)\cdot \log t_i  \big)v(t) = \prod_{i=1}^{p}t_i^{-\beta_i}\exp\big(- \sum_{i=1}^{p}N_i\cdot \log t_i  \big)v(t). 
\end{align}
	One can check that $\tilde{v}$ is single valued, and that $\nabla^{0,1}\tilde{v}=0$. We now fix a basis $v_1,\ldots,v_r$ of $V^\nabla$ such that each $v_i$ belongs to some $\bE_{\bm{\lambda}}$. Then the holomorphic sections $\tilde{v}_1,\ldots,\tilde{v}_r$ of $\cV$ defines     a prolongation of $\cV$ over $X$ which we denoted by $V^{\rm Del}_{\bm{\alpha}}$.  One can check that this construction does not depend on our choice of the basis. This is called the \emph{Deligne extension} of the flat bundle  $(V,\nabla)$ with the eigenvalues of the residue of $\nabla$ over $D_i$ lying in $[-\alpha_i,-\alpha_i+1)$.   Note that it is defined for any flat bundle $(V,\nabla)$ (not necessarily complex variation of Hodge structure).
	
If $(V,\nabla)$ underlies a complex polarized variation of Hodge structure $(V, \nabla, F^\bullet V, Q)$, we define
 $
	F^p_{\bm{\alpha}}:=j_*F^p\cap V_{\bm{\alpha}}^{\rm Del}.
	$ 
	It is called the extension of  the Hodge filtration.  It should be noted that, a priori, we do not know whether $F^p_{\bm{\alpha}}$ is locally free.

	\subsection{Acceptable bundles}
	\begin{dfn}(Admissible coordinate)\label{def:admissible}	Let $X$ be a complex manifold and let $D$ be a simple normal crossing divisor. Let $x$ be a point of $X$, and assume that $\{D_{j}\}_{ j=1,\ldots,\ell}$ 
	are the components of $D$ containing $p$. An \emph{admissible coordinate} around $x$
		is the tuple $(\Omega;z_1,\ldots,z_n;\varphi)$ (or simply  $(\Omega;z_1,\ldots,z_n)$ if no confusion arises) where
		\begin{itemize}
			\item $\Omega$ is an open subset of $X$ containing $x$.
			\item there is a holomorphic isomorphism   $\varphi:\Omega\to \Delta^n$ such that  $\varphi(D_j)=(z_j=0)$ for any
			$j=1,\ldots,\ell$.
		\end{itemize} 
		We shall write $\Omega^*:=\Omega-D$,   $\Omega(r):=\{z\in \Omega\mid |z_i|<r, \, \forall i=1,\ldots,n\}$ and $\Omega^*(r):=\Omega(r)\cap \Omega^*$.  
	\end{dfn}
	
	We define a (incomplete) Poincar\'e-type metric $\omega_P$ on $(\Delta^*)^\ell\times \Delta^{n-\ell}$  by
\begin{align}\label{eq: Poin}
	\omega_P=\sum_{j=1}^{\ell}\frac{\sqrt{-1}dz_j\wedge d\bar{z}_j}{|z_j|^2(\log |z_j|^2)^2}+\sum_{k=\ell+1}^{n} \sqrt{-1}dz_k\wedge d\bar{z}_k. 
\end{align}
	Note that 
	$$
	\omega_P=\hess \log \big(\prod_{j=1}^{\ell}(-\log |z_j|^2)^{-1}  \cdot \prod_{k=\ell+1}^{n}\exp(|z_k|^2)\big).
	$$
	
	For any system of Hodge bundles $(E,\theta,h)$, we have the following crucial norm estimate for its Higgs field $\theta$. The   one dimensional case  is due to Simpson \cite[Theorem 1]{Sim90} and the general case was proved by Mochizuki in \cite[Proposition 4.1]{Moc02}.  Its proof relies on a clever use of Ahlfors-Schwarz lemma.
	\begin{thm} \label{thm:moc}
		Let $(E,\theta,h)$ be a system of Hodge bundle  on $X\backslash D$.  Then  for any point $x\in D$, it has an admissible coordinate $(\Omega;z_1,\ldots,z_n)$  such that the norm 
	 $
		| \theta|_{h,\omega_P}\leq C  
		$ 
		holds over  $\Omega^*$	for some constant $C>0$. Here 	$| \theta|_{h,\omega_P}$ denotes the norm of $\theta$ with respect to $h$ and $\omega_P$.  \qed
	\end{thm}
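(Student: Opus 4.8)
The plan is to adapt the Ahlfors--Schwarz argument of Simpson \cite{Sim90} (one variable) and Mochizuki \cite{Moc02} (several variables), whose engine is a Bochner-type subharmonicity estimate for $|\theta|_h$ that becomes available because the metric of a system of Hodge bundles has very constrained curvature. I would first localise: around $x\in D$ pick an arbitrary admissible coordinate $(\Omega;z_1,\dots,z_n)$ and, after shrinking, assume $\Omega\simeq\Delta^n$ and $D\cap\Omega=(z_1\cdots z_\ell=0)$. Since $\omega_P$ in \eqref{eq: Poin} is a \emph{product} metric, $|\theta|^2_{h,\omega_P}$ splits as a sum over coordinate directions: writing $\theta=\sum_{j\le\ell}f_j\frac{dz_j}{z_j}+\sum_{k>\ell}g_k\,dz_k$ with $f_j,g_k$ holomorphic sections of $\End(\cV)$ over $\Omega^*$, one has $|\theta|^2_{h,\omega_P}=\sum_{j\le\ell}(\log|z_j|^2)^2|f_j|^2_h+\sum_{k>\ell}|g_k|^2_h$, so it suffices to bound, on a smaller polydisc, each semipositive form $\beta_j:=\sqrt{-1}\,\mathrm{tr}(f_jf_j^{*_h})\,dz_j\wedge d\bar z_j$ against the corresponding Poincar\'e (resp.\ Euclidean) factor, and similarly for the $g_k$.

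The heart of the proof is the curvature estimate. Because $h=\bigoplus_{p,q} h_{p,q}$ is positive definite (\cref{item:para}) and $\nabla=D_h+\theta+\theta^{*_h}$ is flat, the data $(\cV,\theta,h)$ satisfies the Hitchin equations on $\Omega^*$, namely $\db_E\theta=0$, $\theta\wedge\theta=0$, and $F_{D_h}+[\theta,\theta^{*_h}]=0$. Feeding the last identity into the Bochner--Kodaira formula for the holomorphic section $f_j$, and using that $\theta$ strictly lowers the Hodge degree $p$ together with the vanishings $\theta_{p-1}\circ\theta_p=0$ forced by $\theta\wedge\theta=0$ --- this is exactly the Griffiths--Schmid curvature computation for systems of Hodge bundles; equivalently, $\beta_j$ is a summand of the pull-back under the (horizontal) period map $\Phi$ of an invariant metric on $\sD$ whose restriction to horizontal directions has holomorphic sectional curvature bounded above by a negative constant --- one obtains that the Gauss curvature of $\beta_j$ is $\le-c<0$ wherever $\beta_j>0$, for a constant $c>0$ independent of the point; equivalently $\Delta_{z_j}\log|f_j|^2_h\ge c\,|f_j|^2_h$ on that locus, and likewise for the $g_k$.

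It then remains to invoke the Ahlfors--Schwarz lemma. In the one-variable case this is the classical lemma applied to the (possibly degenerate) metric $|f|^2_h\sqrt{-1}dz\wedge d\bar z$ on $\Delta^*$, the locus where $f$ drops rank being dealt with by continuity; it yields $|f|^2_h\lesssim(\log|z|^2)^{-2}$, i.e.\ $|\theta|_{h,\omega_P}\lesssim1$. In several variables one applies instead the Yau--Royden Schwarz lemma: $(\Omega^*,\omega_P)$ is a complete K\"ahler metric of Poincar\'e type, hence has Ricci curvature bounded below, while $\beta:=\sqrt{-1}\,\mathrm{tr}(\theta\wedge\theta^{*_h})$ has holomorphic sectional curvature $\le$ a negative constant wherever it is positive; therefore $\beta\le C\omega_P$, which unwinds to $|\theta|_{h,\omega_P}\le C'$ on $\Omega^*$.

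I expect the main obstacle to be the curvature inequality of the second paragraph: for a general tame harmonic bundle it is false, and the argument must use precisely the Hodge grading (so that $\theta$ is strictly block-lower-triangular in the index $p$) and the relation $\theta\wedge\theta=0$ to kill the cross terms $\langle\theta_{p\pm1}\theta_p\,\cdot,\cdot\rangle$ which would otherwise contribute with the wrong sign to the Bochner term. A secondary technical point is the passage from the curve case to the several-variable case --- either verifying that the constant in the Bochner estimate stays uniform as the transverse coordinates degenerate, or applying the Schwarz lemma on the mixed Poincar\'e/Euclidean base --- together with a rigorous treatment of the closed, lower-dimensional locus where $\theta$, hence $\beta$, drops rank.
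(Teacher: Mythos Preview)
The paper does not supply its own proof: the theorem is stated with a \qed and attributed to Simpson \cite[Theorem~1]{Sim90} (one variable) and Mochizuki \cite[Proposition~4.1]{Moc02} (several variables), with the sole comment that the proof ``relies on a clever use of Ahlfors--Schwarz lemma.'' Your outline is precisely that Simpson--Mochizuki argument, so the approaches coincide and there is nothing substantive to compare.

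Two minor points of bookkeeping to clean up. First, with your convention $\theta=\sum_{j\le\ell}f_j\,\tfrac{dz_j}{z_j}+\cdots$, the $(1,1)$-form whose Gaussian curvature is $\le -c$ is the $j$-th component of $\sqrt{-1}\,\mathrm{tr}(\theta\wedge\theta^{*_h})$, namely $|f_j|^2_h\,\tfrac{\sqrt{-1}\,dz_j\wedge d\bar z_j}{|z_j|^2}$, not the $\beta_j$ you wrote (the two differ by a factor $|z_j|^{-2}$, and with your $\beta_j$ the Ahlfors bound would only give $|f_j|^2_h\lesssim |z_j|^{-2}(\log|z_j|^2)^{-2}$, which does \emph{not} yield $|\theta|_{h,\omega_P}\lesssim 1$); you silently correct this in the last paragraph when you set $\beta=\sqrt{-1}\,\mathrm{tr}(\theta\wedge\theta^{*_h})$. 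Second, the metric $\omega_P$ of \eqref{eq: Poin} is explicitly declared \emph{incomplete} in the paper (because of the Euclidean factors and the boundary $|z_k|=1$), so the Yau--Royden lemma does not apply to $(\Omega^*,\omega_P)$ as stated; one either slices in each punctured-disc direction with the transverse variables frozen in a compact set (where the one-variable Ahlfors lemma applies on the complete $\Delta^*$) and checks uniformity, or passes to a slightly smaller polydisc and replaces the Euclidean factors by complete hyperbolic ones before invoking the Schwarz lemma. Neither point affects the strategy.
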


Here we also recall the following definition in \cite[Definition 2.7]{Moc07}.
\begin{dfn}[Acceptable bundle]
	Let $(E, h)$  be a hermitian vector bundle over $X\backslash D$. We say that $(E, h)$ is an acceptable at $p\in D$, if the following holds: there is an  admissible coordinate $(\Omega;z_1,\ldots,z_n)$ around $p$, such that the norm  $\lvert R(E,h) \rvert_{h,\omega_P}\leq C$ for some $C>0$.  Here $R(E,h)$ is the Chern curvature of $(E,h)$.   When $(E_,h)$ is acceptable at any point $p$ of $D$, it is called acceptable.  
\end{dfn} 
Hodge filtrations and Hodge bundles endowed with the Hodge metric are all acceptable. 
\begin{lem}\label{lem:acceptable}
Let   $(V, \nabla, F^\bullet, Q)$  be a complex polarized variation of Hodge structure of weight $m$ on   $X\backslash D$. Let $h$ (resp. $h_{p,q}$) be the hermitian metric on $V$ (resp. on $E^{p,q}$) introduced in \cref{sec:VHS}.  Consider the induced hermitian metric $h_p:=h|_{F^p}$ on the Hodge filtration $F^p$. Then  both $(F^p, h_p)$ and $(E^{p,q},h_{p,q})$ are acceptable  bundles. 
\end{lem}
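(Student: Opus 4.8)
The plan is to compute the Chern curvature of the Hodge metric on the full flat bundle and then control the curvatures of the subbundle $F^p$ and the quotient $E^{p,q}$ via the second fundamental form, reducing everything to the norm bound on the Higgs field provided by \cref{thm:moc}. Recall that on $X\backslash D$ the $\bC$-VHS gives a decomposition $\nabla=\bar\theta+\partial+\bar\partial+\theta$, where $(\partial+\bar\partial)$ is the Chern connection of $(V,h)$ for the holomorphic structure $\nabla''=\bar\partial$ (note $h$ is flat for $\nabla$, but only $(1,0)+(0,1)$-flat after discarding $\theta,\bar\theta$), and $\bar\theta=\theta^{*_h}$. Flatness of $\nabla$ together with type considerations gives the standard identity
\begin{align*}
R(V,h) \;=\; \nabla_{\partial+\bar\partial}^2 \;=\; -(\theta\bar\theta+\bar\theta\theta),
\end{align*}
so that $|R(V,h)|_{h,\omega_P}\leq 2|\theta|_{h,\omega_P}^2$. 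Choosing, at a given point $x\in D$, an admissible coordinate where $|\theta|_{h,\omega_P}\leq C$ (\cref{thm:moc}), we get $|R(V,h)|_{h,\omega_P}\leq 2C^2$ on $\Omega^*$; hence $(V,h)$ is acceptable. This is the one genuinely analytic input and I would treat it as the heart of the argument, though it is essentially immediate from \cref{thm:moc}.

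Next I would pass to subbundles and quotients. Since $E^{p,q}=F^p/F^{p+1}$ with the metric $h_{p,q}$, and since $V=\bigoplus_{p+q=m}E^{p,q}$ orthogonally with $h=\bigoplus h_{p,q}$ already at the $\Cin$ level (the Hodge decomposition is $h$-orthogonal), the $\Cin$-bundle underlying $E^{p,q}$ is an orthogonal direct summand of $(V,h)$. Its Chern connection for the holomorphic structure $\bar\partial$ is the compression $\pr_{p,q}\circ(\partial+\bar\partial)\circ\pr_{p,q}$, and by the Gauss–Codazzi formula its curvature is
\begin{align*}
R(E^{p,q},h_{p,q}) \;=\; \pr_{p,q}\,R(V,h)\,\pr_{p,q} \;-\; \bar\theta_{p,q}\wedge\theta_{p,q}\;-\;\theta_{p,q}\wedge\bar\theta_{p,q},
\end{align*}
where $\theta_{p,q}$, $\bar\theta_{p,q}$ are the off-diagonal blocks of $\theta$, $\bar\theta$ mapping out of and into $E^{p,q}$ — equivalently the second fundamental form of the summand inside $(V,\nabla''=\bar\partial)$. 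Each such block has $|\cdot|_{h,\omega_P}\leq|\theta|_{h,\omega_P}\leq C$, so $|R(E^{p,q},h_{p,q})|_{h_{p,q},\omega_P}$ is bounded by a constant on $\Omega^*$, proving $(E^{p,q},h_{p,q})$ is acceptable.

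For $(F^p,h_p)$ I would argue the same way: $F^p=\bigoplus_{i\geq p}V^{i,m-i}$ is again an $h$-orthogonal $\Cin$-direct summand of $V$ (not $\nabla$-invariant, but that is irrelevant — acceptability only concerns $(E,h)$ as a hermitian holomorphic bundle), so $R(F^p,h_p)=\pr_{F^p}R(V,h)\pr_{F^p}$ minus the appropriate second-fundamental-form terms, all of which are again blocks of $\theta,\bar\theta$ bounded in $\omega_P$-norm by $C$. This gives the desired curvature bound on $\Omega^*$, and since $x\in D$ was arbitrary and each time we may shrink to a valid admissible coordinate, $(F^p,h_p)$ and $(E^{p,q},h_{p,q})$ are acceptable everywhere along $D$. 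The only point requiring a little care — and the closest thing to an obstacle — is making sure the admissible coordinate furnished by \cref{thm:moc} can be taken to simultaneously satisfy the requirements of \cref{def:admissible} and the Higgs-field bound; but \cref{thm:moc} is stated precisely so as to produce such a coordinate, so this is a matter of bookkeeping rather than substance.
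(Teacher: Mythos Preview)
Your strategy is the same as the paper's: both reduce to the observation that the Chern curvatures of $(F^p,h_p)$ and $(E^{p,q},h_{p,q})$ are quadratic expressions in the components of $\theta$ and $\theta^\dagger$, and then invoke the bound $|\theta|_{h,\omega_P}\leq C$ from \cref{thm:moc}. The paper simply records the explicit Griffiths curvature formulas (e.g.\ $R_{h_{p,q}}(E^{p,q})=-\theta_{p,q}^\dagger\wedge\theta_{p,q}-\theta_{p+1,q-1}\wedge\theta_{p+1,q-1}^\dagger$) and bounds them directly, whereas you outline a derivation via Gauss--Codazzi.

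There is one slip in your derivation worth flagging. You write $\nabla''=\bar\partial$, but in the paper's conventions $\nabla''=\bar\partial+\bar\theta$; the identity $R=-(\theta\bar\theta+\bar\theta\theta)$ and the Chern connection $\partial+\bar\partial$ pertain to the \emph{Higgs} bundle $(E,\bar\partial,h)$, not to the flat bundle $(\cV,\nabla'',h)$. For $E^{p,q}$ this is harmless: its holomorphic structure is indeed $\bar\partial$, and it is in fact a holomorphic orthogonal direct summand of $(E,\bar\partial,h)$, so the second fundamental form vanishes and your extra terms are redundant (the restriction of $-[\theta,\bar\theta]$ already gives the answer). For $F^p$, however, the relevant holomorphic structure is the one inherited from $\cV$, so Gauss--Codazzi should be run inside $(\cV,\nabla'',h)$, whose Chern connection is $\partial-\theta+\bar\partial+\bar\theta$ with curvature $-2[\theta,\bar\theta]$, and whose second fundamental form along $F^p$ is $-\theta_{p,m-p}$. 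As written, your computation would produce the curvature of $F^p$ for the wrong holomorphic structure. The conclusion is unaffected, since everything remains quadratic in $\theta,\theta^\dagger$, but the bookkeeping needs this correction.
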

\begin{proof}
We write $\theta_{p,q}:=\theta|_{E^{p,q}}$ and let $\theta_{p,q}^\dagger:E^{p-1,q+1}\to A^{0,1}(E^{p,q})$ be its adjoint with respect to $h_{p,q}$.	For the hermitian bundle $(F^p, h_p)$, its curvature is
	$$	
 R_{h_p}(F^p)=- 2\theta_{m,0}^\dagger\wedge\theta_{m,0}+  2\sum_{i=1}^{m-p}(-\theta_{m-i,i}^\dagger\wedge \theta_{m-i,i}-\theta_{m-i+1,i-1}\wedge\theta_{m-i+1,i-1}^\dagger)+\theta_{p,m-p}^\dagger\wedge\theta_{p,m-p}.
	$$
	The curvature of the bundle $(E^{p,q},h_{p,q})$ is 
	$$
	 R_{h_{p,q}}(E_{p,q})=-\theta_{p,q}^\dagger\wedge\theta_{p,q}-\theta_{p+1,q-1}\wedge\theta_{p+1,q-1}^\dagger.
	$$
	By \cref{thm:moc}, for any point $x\in D$ there is an admissible coordinate $(\Omega;z_1,\ldots,z_n)$ around $x$  such that the norm 
	$$
\sum_{p+q=m}	| \theta_{p,q}|_{h_{p,q},\omega_P}=	| \theta|_{h,\omega_P}\leq C  
	$$
	holds over  $\Omega^*$	for some constant $C>0$.  Since $\theta_{p,q}^\dagger$ is the adjoint of $\theta_{p,q}$ with respect to $h_{p,q}$, one has $
	| \theta^\dagger_{p,q}|_{h,\omega_P}\leq C  
	$ for any $p$. It follows that $|R_{h_p}(F^p)|_{h_{p,q},\omega_P}\leq C'$ and $| R_{h_{p,q}}(E_{p,q})|_{h_{p,q},\omega_P}\leq C'$ for some $C'>0$. Hence  $(F^p, h_p)$ and $(E^{p,q},h_{p,q})$ are both acceptable.  
\end{proof}

\subsection{Adapted to log order}\label{sec:adapt}
We recall some notions in \cite[\S 2.2.2]{Moc07}.  
Let $X$ be a complex manifold, $D$ be a simple normal crossing divisor on $X$, and $E$ be a holomorphic vector bundle on $X\backslash D$ such that  $E|_{X\backslash D}$ is equipped with  a hermitian metric $h$. Let $\bm{v}=(v_1,\ldots,v_r)$ be a smooth frame of $E|_{X\backslash D}$. We obtain the $H(r)$-valued function $H(h,\bm{v})$ defined over $X\backslash D$,whose $(i,j)$-component is given by $h(v_i,v_j)$.  

Let us consider the case $X=\bD^n$, and $D=\sum_{i=1}^{\ell}D_i$ with $D_i=(z_i=0)$. We have the coordinate $(z_1,\ldots,z_n)$. Let  $h$, $E$ and $\bm{v}$  be as above.

\begin{dfn}\label{def:adapted}
	A smooth frame $\bm{v}$ on $X\backslash D$ is called \emph{adapted up to log order}, if  the following inequalities hold over $X\backslash D$:
	$$
	C^{-1}(-\sum_{i=1}^{\ell}\log |z_i|)^{-M}\leq H(h,\bm{v})\leq  C(-\sum_{i=1}^{\ell}\log |z_i|)^{M}
	$$   
	for some positive numbers $M$ and $C$.
\end{dfn}

\subsection{Parabolic vector bundles}\label{sec:parahiggs}

In this subsection, we recall the notions of parabolic (vector) bundles. For more details we refer to \cite{Moc06}. Let $X$ be a complex manifold, $D=\sum_{i=1}^{\ell}D_i$ be a reduced simple normal crossing divisor, $U=X\backslash D$ be the complement of $D$ and $j:U\to X$ be the inclusion.

\begin{dfn}\label{dfn:parab-higgs}
	A parabolic bundle $	\mathcal{P}_* E$ on $(X, D)$ is a  holomorphic vector bundle $E$ on $U$, together with an $\mathbb{R}^\ell$-indexed
	filtration $\pae$ ({\em parabolic structure}) by locally free subsheaves of  $j_*E$ such that
	\begin{thmlist}[leftmargin=0.7cm]
		\item $\bm{\alpha}\in \mathbb{R}^\ell$ and $\pae|_U=E$. 
		\item $\pa E\subset \cP_{\!\bm{\beta}}E$ if $\alpha_i\leq \beta_i$ for all $i$.
		\item  For $1\leq i\leq \ell$, $\mathcal{P}_{\bm{\alpha}+\bm{1}_i}E= \pae\otimes \cO_X(D_i)$, where $\bm{1}_i=(0,\ldots, 1,  \ldots, 0)$ with $1$ in the $i$-th component.
		\item \label{semiconti} $\mathcal{P}_{\bm{\alpha}+\bm{\ep}}E= \pae$ for any vector $\bm{\epsilon}=(\epsilon, \ldots, \epsilon)$ with $0<\epsilon\ll 1$.
		\item  The set of {\em weights}   $\{\bm{\alpha} \mid \pae/\mathcal{P}_{\! <\bm{\alpha}}E\} \not= 0$    is  discrete in $\mathbb{R}^\ell$.
	\end{thmlist}
\end{dfn}   


	\subsection{Prolongation via norm growth}\label{sec:pro}
	Let $X$ be a complex manifold, $D=\sum_{i=1}^{\ell}D_i$ be a   simple normal crossing divisor, $U=X\backslash D$ be the complement of $D$ and $j:U\to X$ be the inclusion. Let $(E,h)$ be a hermitian vector bundle on $U$.	For any $\bm{\alpha}=(a_1,\ldots,a_\ell)\in \mathbb{R}^\ell$, we can prolong $E$ over $X$ by a sheaf of $\cO_X$-module $\pae$  as follows: 
	\begin{align*} 
		\pa  E(U)=\{\sigma\in\Gamma(U\backslash D,E|_{U\backslash D})\mid |\sigma|_h\lesssim {\prod_{i=1}^{\ell}|z_i|^{-\alpha_i-\ep}}\  \ \mbox{for all}\ \ep>0\}. 
	\end{align*} 
 In \cite[Theorem 21.3.1]{Moc11} Mochizuki proved that the prolongation of acceptable bundles defined above are parabolic bundles. 
	\begin{thm}[Mochizuki]\label{thm:Moc2}
		Let $(E,h)$ be an acceptable bundle over $X\backslash D$. Then $	\mathcal{P}_* E$ defined above is a parabolic bundle. \qed
	\end{thm}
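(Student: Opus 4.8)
The plan is to localize to the model $X=\Delta^n$, $D=\{z_1\cdots z_\ell=0\}$, and to verify the five axioms of \cref{dfn:parab-higgs} for $\mathcal{P}_*E$, the substantive points being local freeness of each $\pae$ and discreteness of the weights. The formal axioms I would dispose of first: once each $\pae$ is known to be a coherent subsheaf of $j_*E$, axiom (i) is the definition, axiom (ii) is immediate from the growth condition, and axiom (iii) follows from $|z_i\sigma|_h=|z_i|\,|\sigma|_h$, which shifts the admissible exponent in the $i$-th direction by exactly $1$; axioms (iv) and (v) are two faces of the discreteness of the weight set and I return to them at the end.

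The analytic core is the following statement, where acceptability enters essentially: after shrinking $\Omega$, the bundle $E$ admits on $(\Delta^*)^\ell\times\Delta^{n-\ell}$ a holomorphic frame $\bm v=(v_1,\ldots,v_r)$ which is adapted up to log order in the sense of \cref{def:adapted}, i.e. $C^{-1}(-\sum_i\log|z_i|)^{-M}\le H(h,\bm v)\le C(-\sum_i\log|z_i|)^{M}$. To prove this I would proceed in two steps. First, for any holomorphic section $\sigma$ of $E$ near $D$, the curvature bound $|R(E,h)|_{h,\omega_P}\le C$ yields the Poincaré–Lelong type inequality $\hess\log|\sigma|_h^2\ge -C\,\omega_P$; comparing $\log|\sigma|_h^2$ with the explicit Kähler potential $\log\big(\prod_j(-\log|z_j|^2)^{-1}\cdot\prod_k\exp(|z_k|^2)\big)$ of $\omega_P$ and running a sub-mean-value / Moser-type iteration on shrinking Poincaré balls, one gets that $|\sigma|_h$ grows at most like $C'\prod_i|z_i|^{-M}(-\log|z_i|)^{M'}$ with $M,M'$ depending only on $r$ and the curvature bound, and, applying the same to the dual acceptable bundle $(E^\vee,h^\vee)$, a matching lower bound; this is Cornalba–Griffiths' and Simpson's one-variable estimate, upgraded to several variables as in \cite{Moc02}. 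Second, one feeds these growth ranges into the $L^2$/$\bar\partial$-construction of holomorphic sections (Hörmander estimates against weights $\prod_i|z_i|^{2c_i}$) to produce $r$ sections that stay independent along every $D_i$ and whose norms lie in the prescribed range, i.e. the sought adapted frame; the several-variable argument is by induction on $\ell$.

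Granting the adapted frame, coherence and local freeness of $\pae$ follow by sandwiching. For $\bm\alpha=(a_1,\ldots,a_\ell)$ and suitable integers $m_i^\pm$ near $a_i$, set $\mathcal{G}^\pm:=\bigoplus_{i=1}^r z_1^{-m_1^\pm}\cdots z_\ell^{-m_\ell^\pm}\,\cO_X\, v_i$; the two-sided log-order bounds on $H(h,\bm v)$ give $\mathcal{G}^-\subseteq\pae\subseteq\mathcal{G}^+$, so $\pae$ is a coherent subsheaf of a locally free sheaf, and it is saturated because the growth condition is preserved under multiplication by $\cO_X$. One then produces an actual holomorphic frame of $\pae$ by successively clearing leading terms of generators along $D_1,\ldots,D_\ell$, using the estimate to control the remainders; the parameter directions $\Delta^{n-\ell}$ cause no trouble since all constants are uniform there (alternatively, a flatness/semicontinuity argument in $z_{\ell+1},\ldots,z_n$). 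Finally, for discreteness: along $D_i$ a weight of $\mathcal{P}_*E$ is a jump point of $\bm\alpha\mapsto\pae$, which is detected on the finitely many local generators, and by the frame estimate the relevant vanishing orders lie in a fixed finite subset of $\bR/\bZ$; hence each bounded box contains only finitely many weights, which is axiom (v), and axiom (iv) follows because $\pae$ is constant across each gap between consecutive weights while the condition "$|\sigma|_h\lesssim\prod_i|z_i|^{-\alpha_i-\ep}$ for all $\ep>0$" is insensitive to enlarging $\bm\alpha$ within such a gap.

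The step I expect to be the real obstacle is the uniform log-order norm estimate of the second paragraph: it is precisely there that acceptability (bounded curvature in the Poincaré metric) is indispensable — dropping it, the prolongation need not even be coherent — and getting the comparison/iteration to run in all $\ell$ degenerating directions at once, with constants independent of the remaining variables, is where the work lies. Everything else is bookkeeping organized around that estimate.
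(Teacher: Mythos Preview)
The paper does not prove this theorem: it is quoted from \cite[Theorem 21.3.1]{Moc11} as a black box, with no argument given. So there is no in-paper proof to compare against; the relevant benchmark is Mochizuki's actual construction.

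Your sketch has a genuine gap at its central analytic claim. You assert that an acceptable bundle admits a \emph{holomorphic} frame $\bm{v}$ on $(\Delta^*)^\ell\times\Delta^{n-\ell}$ adapted up to log order, i.e.\ with $H(h,\bm{v})$ bounded above and below by powers of $-\sum_i\log|z_i|$. This is false in general. Already for the trivial line bundle on $\Delta^*$ with metric $|1|_h^2=|z|^{-2\alpha}$, $\alpha\in(0,1)$ (flat, hence acceptable), every holomorphic frame satisfies $|v|_h\sim |z|^{-\alpha}$, a genuine power of $|z|$ and not a log term. The correct statement, and what Mochizuki actually proves, is that there exist a holomorphic frame $v_1,\ldots,v_r$ and real numbers $a_{ij}$ such that the \emph{smooth} rescaled sections $v_i\prod_j|z_j|^{-a_{ij}}$ form a frame adapted up to log order; the $a_{ij}$ are exactly the parabolic weights, and they need not be integers. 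Your own first step (the subharmonicity bound $\hess\log|\sigma|_h^2\ge -C\omega_P$) only yields polynomial growth $\prod_i|z_i|^{-M}$, which is consistent with this; it is the second step, where you claim H\"ormander with power weights produces a log-order holomorphic frame, that overreaches.

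The error propagates downstream. Your sandwiching $\mathcal{G}^-\subseteq\pae\subseteq\mathcal{G}^+$ with \emph{integer} shifts $m_i^\pm$ would force all weight jumps to lie in $\bZ$, contradicting the line-bundle example above. Once the real exponents $a_{ij}$ are present, the sandwiching must be organized around them, and establishing that each $\pae$ is \emph{locally free} (not just coherent and saturated) together with discreteness of the weight set requires more structure: Mochizuki constructs compatible filtrations along each stratum of $D$ and runs an induction over the strata. Your outline captures the shape of the one-variable tame case with trivial parabolic structure but misses the mechanism by which nontrivial real weights arise from the metric.
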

	
		\subsection{Period domain and period mapping}\label{sec:period}
	In this subsection we quickly review the definitions of period domain and period mapping.   We refer the readers to \cite{CMP17,KKM11,SS22} for more details.

Let $(V=\oplus_{p+q=m}V^{p,q},Q)$ be a polarized complex Hodge structure of weight $m$  defined in \cref{sec:VHS}.  Recall that the Hodge filtration is defined to be $F^p:=\oplus_{i\geq p}V^{i,m-i}$. After fixing  $m$ and $\dim_{\bC}F^p$, the set of all such filtration $F^\bullet$  is a complex flag manifold, which is denoted by $\check{\sD}$. It is a closed submanifold of a product of Grassmannians, and   is thus a projective manifold. The subset $\sD$ of all complex polarized Hodge structures are charcterized by
	\begin{enumerate}[label=(\alph*)]
		\item $
		F^p=F^p\cap (F^{p+1})^\perp\oplus F^{p+1}$. 
		\item  $(-1)^pQ$ is positive definite over $F^p\cap (F^{p+1})^\perp$.
	\end{enumerate}
	It is an open submanifold of $\check{\sD}$. We usually write $F$  instead of $F^\bullet$ to lighten the notation. Since the groups ${\rm GL}(V)$ and $G:=U(V,Q)$ act transitively on $\check{\sD}$ and $\sD$ respectively,  $\check{\sD}$ and $\sD$ are thus homogeneous spaces.
	
	For any Hodge structure $F\in \check{\sD}$, the holomorphic tangent space $T_{\check{\sD},F}$ of $\check{\sD}$  at $F$ is identified with  
	\begin{align*} 
	\End(V)/ \{A\in \End(V) \mid A(F^p)\subset F^p \    \mbox{for all}\  p \}.
	\end{align*}
	For any $A\in \End(V)$, we denote by $[A]_F$ its image in $T_{\check{\sD},F} $.  
	
	A tangent vector $[A]_F$  in $T_{\check{\sD},F}$ is called \emph{horizontal} if $A(F^p)\subset F^{p-1}$ for all $p$. The subbundle of $T_{\check{\sD}}$ consisting of horizontal vectors is denoted by $T_{\check{\sD}}^{-1,1}$ and one can show that   it is a holomorphic subbundle of $T_{\check{\sD}}$. A holomorphic map $f:\Omega\to \check{\sD}$ from a complex manifold $\Omega$ is called \emph{horizontal }if $df: T_\Omega\to f^*T_{\check{\sD}}$ factors through $f^*T_{\check{\sD}}^{-1,1}$.
	
	A complex (unpolarized) variation of Hodge structure  $(V=\oplus_{p+q=m},\nabla)$  over a complex manifold $\Omega$ induces a horizontal holomorphic map $\Phi: \tilde{\Omega}\to \check{\sD}$ by the Griffiths transversality, where $\tilde{\Omega}$  is the universal cover of $\Omega$.  Here we choose the reference space of $\check{\sD}$ to be  the space of multivalued flat sections $V^\nabla$.  $\Phi$ is called the period mapping associated to  $(V,\nabla,F^\bullet)$.   When this   complex variation of Hodge structure  is moreover polarized, $\Phi$ factors through $\sD$.

	\section{Nilpotent orbit theorem}
	\subsection{Two results of $L^2$-estimate}
	Set $X=\Delta^n$ and $D=(z_1\cdots z_\ell=0)$. We equip the complement $U:=X\backslash D$ with the Poincar\'e metric $\omega_P$ defined in \eqref{eq: Poin}. Write 
	\begin{align}\label{eq:Xr}
		 X(r):=\{z\in X\mid |z_i|<r\ \mbox{for}\ i=1,\ldots,\ell\} \quad \mbox{and}\quad U(r)=X(r)\cap U.
	\end{align} 
	\begin{lem}\label{thm:L2}
		Let $(F, h_F)$ be  a hermitian vector bundle on  $U$ such that $|R_{h_F}(F)|\leq C\omega_P$ for some constant $C>0$.  Then for any section $\eta\in \sC^\infty(U, \Lambda^{0,1}T_U^*\otimes F)$ such that $|\eta|_{h_F,\omega_P}\lesssim   \prod_{j=1}^{\ell}|z_j|^{\ep}$ for some  $\ep>0$, and $\db\eta=0$, there exists $\sigma\in  \sC^\infty(U, F)$ such that  $\db \sigma=\eta$ and
\begin{align*}
		\int_{U} |\sigma|^2_{h_F} \prod_{j=1}^{\ell}(-\log |z_j|^2)^{N}  d\mbox{vol}_{\omega_P}<\infty 
	\end{align*}
		for some $N\gg 1$.
	\end{lem}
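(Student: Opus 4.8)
The plan is to prove \Cref{thm:L2} by the standard Hörmander–Andreotti–Vesentli $L^2$-machinery, applied not to the metric $h_F$ directly but to a twisted metric that absorbs the polynomial weight $\prod_j(-\log|z_j|^2)^N$, and on a Kähler manifold obtained by a suitable modification of $(U,\omega_P)$. First I would recall that the weight function $\varphi := \sum_{j=1}^\ell \big(\text{something like}\ N\log(-\log|z_j|^2)\big)$ is, up to a bounded term, a Kähler potential contribution: a direct computation gives $\hess \big(-\log(-\log|z_j|^2)\big) \geq c\,\frac{\sqrt{-1}\,dz_j\wedge d\bar z_j}{|z_j|^2(\log|z_j|^2)^2}$ on a punctured polydisc, so $\hess(-\varphi)$ is bounded below by a negative constant times $\omega_P$; equivalently $\hess \varphi$ is bounded \emph{above} by $C\omega_P$ but one also gets a lower bound of the form $\hess\varphi \geq -C\omega_P$. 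The point is the \emph{mixed} curvature term: I want to choose the weight so that the curvature operator $\sqrt{-1}(R_{h_F}(F) + \hess\varphi\cdot\ide)$ acting on $(0,1)$-forms is \emph{strictly} positive relative to $\omega_P$, which is where the bound $|R_{h_F}(F)|\leq C\omega_P$ from the hypothesis enters: it ensures the curvature contribution of $F$ is a bounded perturbation, so it suffices to make $\hess\varphi$ dominate it.

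The technical subtlety is that $\omega_P$ is \emph{incomplete} (it is a Poincaré metric only in the divisorial directions but the polydisc in those directions is still of finite ``radius'' near $z_j=0$ only after the $\log$ blow-up — actually $\omega_P$ \emph{is} complete near $D$ in the first $\ell$ variables but incomplete at the outer boundary $|z_j|\to 1$ and in the $\Delta^{n-\ell}$ directions). To run Hörmander's theorem cleanly I would work on a slightly shrunk polydisc $U(r_0)$, or better, exhaust $U$ by relatively compact opens and use the standard trick: first solve $\db$ with $L^2$ estimates on a complete Kähler manifold. Concretely I would replace $X=\Delta^n$ by a complete Kähler metric $\hat\omega$ on $U$ which agrees with $\omega_P$ near $D$ and is complete also towards the outer boundary (e.g.\ add $\hess\big(-\log(1-|z_i|^2)\big)$ terms, or just pass to a smaller polydisc and use that a bounded pseudoconvex domain carries a complete Kähler metric), noting that $\hat\omega \geq \omega_P$ so that the hypothesis $|\eta|_{h_F,\omega_P}\lesssim \prod|z_j|^\ep$ implies $L^2$-integrability of $\eta$ against $\hat\omega$ with the weight, and that the $\db$-closedness and the output estimate are only improved by passing between comparable metrics in the right direction. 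Then apply the Hörmander estimate for $(n,1)$-forms with values in $F\otimes K_U^{-1}$ (equivalently Andreotti–Vesentini for $(0,1)$-forms after twisting by the canonical bundle's curvature, which for $\omega_P$ on a polydisc is again a bounded-plus-log perturbation) to get $\sigma$ with $\int_U |\sigma|_{h_F}^2 e^{-\varphi}\,d\vol_{\hat\omega} < \infty$; since $e^{-\varphi} \sim \prod_j(-\log|z_j|^2)^{N}$ up to constants and $d\vol_{\hat\omega} = d\vol_{\omega_P}$ near $D$ while on the relatively compact outer part everything is bounded, this is exactly the claimed finiteness.

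In more detail the key steps, in order, are: (1) fix the weight $\varphi = \sum_{j=1}^\ell N\log(-\log|z_j|^2) + (\text{smooth bounded correction})$ and compute $\hess\varphi$, showing $\hess\varphi \geq -C_1\omega_P$ as a $(1,1)$-form and, crucially, that near $D$ the component of $\hess\varphi$ in the $z_j$-direction is $\geq \tfrac{N}{2}\cdot \frac{\sqrt{-1}dz_j\wedge d\bar z_j}{|z_j|^2(\log|z_j|^2)^2}$; (2) combine with $|R_{h_F}(F)|_{h_F,\omega_P}\leq C$ to conclude that, for $N$ large, the total curvature $\Theta := \sqrt{-1}\big(R_{h_F}(F) + (\hess\varphi)\otimes\ide_F\big)$ satisfies the Nakano-type positivity $[\Theta,\Lambda_{\hat\omega}] \geq \ide$ on $F$-valued $(n,1)$-forms — here I use that $\hat\omega=\omega_P$ near $D$ where the estimate is delicate, while on the outer compact piece I just take $N$ or an additive constant large; (3) invoke the Hörmander $L^2$-existence theorem on the complete Kähler manifold $(U,\hat\omega)$ with the singular-but-controlled weight $e^{-\varphi}\det$-twist, noting $\eta\in L^2$ because $|\eta|_{h_F,\omega_P}\lesssim\prod|z_j|^\ep$ beats any fixed power of $\log$; (4) translate the resulting weighted $L^2$ bound back into the stated estimate using $e^{-\varphi}\asymp \prod(-\log|z_j|^2)^N$ and the comparison of volume forms. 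I expect step (1)–(2), i.e.\ getting a genuinely Nakano-positive twisted curvature on the incomplete Poincaré directions with a \emph{uniform} lower bound (not just semipositivity), to be the main obstacle: one must be careful that the mixed second derivatives of $-\log(-\log|z_j|^2)$ and the cross terms between different $D_j$ do not destroy positivity, and that the correction needed to make the metric complete near $|z_j|\to 1$ does not interfere with the curvature positivity near $D$; the cleanest fix is to localize — prove the estimate on a neighborhood of $D$ with $\hat\omega=\omega_P$ (extended to be complete) and handle the rest by ordinary elliptic/compactness arguments, then patch.
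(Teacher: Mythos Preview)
Your proposal is correct and follows essentially the same strategy as the paper: twist $F$ by $K_U^{-1}$ to pass to $(n,1)$-forms, multiply the metric by a weight of the form $e^{-N\varphi}$ with $\varphi$ built from $-\log(-\log|z_j|^2)$, and apply the Demailly--H\"ormander $L^2$-estimate on the pseudoconvex domain $U$.

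The one simplification you are missing, which dissolves the ``main obstacle'' you anticipate in steps (1)--(2), is that $\omega_P$ admits an \emph{exact} global K\"ahler potential: with
\[
\varphi=\log\Big(\prod_{j=1}^\ell(-\log|z_j|^2)^{-1}\cdot\prod_{k=\ell+1}^n e^{|z_k|^2}\Big)
\]
one has $i\partial\bar\partial\varphi=\omega_P$ on the nose, so twisting by $e^{-N\varphi}$ adds precisely $N\omega_P\otimes\ide$ to the curvature. Combined with the acceptability bound $|R_{h_F}(F)|\leq C\omega_P$ (hence $iR_h(E)\geq_{\mathrm{Nak}}-(N-1)\omega_P\otimes\ide$ for $N$ large, via \cite{DH19}), this gives $iR_{h(N)}(E)\geq_{\mathrm{Nak}}\omega_P\otimes\ide$ with no cross terms to worry about and no separate analysis of the $\Delta^{n-\ell}$ directions. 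The completeness issue you raise is handled by citing \cite[Th\'eor\`eme~4.1 and Remarque~4.2]{Dem82}, which applies to weakly pseudoconvex K\"ahler manifolds; no auxiliary metric $\hat\omega$ is needed. (Also note a sign slip in your write-up: you want $\varphi\sim -N\sum\log(-\log|z_j|^2)$, not $+N$, to get $e^{-\varphi}\asymp\prod(-\log|z_j|^2)^N$.)
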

	\begin{proof}
		For  the line bundle $K_U^{-1}$ endowed with the natural metric $g$ induced by $\omega_P$,  it is acceptable. 
		Hence for the hermitian vector  bundle $(E, h):=(K_U^{-1}\otimes F, g\cdot h_F)$,  it is also acceptable.   
It follows from  \cite[Lemma 1.10]{DH19} that one can  choose $N\gg 1$ such that 
		$$ i R_h(E)\geq_{Nak} -(N-1)\omega_P\otimes \ide_E,$$
		where ``$\geq_{Nak}$" stands for Nakano semipositive (see \cite[D\'efinition 2.2]{Dem82}).  	For the function	\begin{align}\label{eq:potent}
		 \varphi:=\log \big(\prod_{j=1}^{\ell}(-\log |z_j|^2)^{-1}  \cdot \prod_{k=\ell+1}^{n}\exp(|z_k|^2)\big),
		\end{align}
	one has
		 $
		\hess \log\varphi=\omega_P.
		$ 
		For any $k\in \bZ$	we define a new metric $h(k)=h\cdot e^{-k\varphi}$ for $E$. Therefore, 
		$$
		iR_{h(N)}(E)=iR_h(E)+N\omega_P\otimes \ide_E \geq_{Nak} \omega_P\otimes \ide_E.
		$$ Note that  $\sC^\infty(U, \Lambda^{n,1}T_U^*\otimes E)=\sC^\infty(U, \Lambda^{0,1}T_U^*\otimes F)$ with $|\eta|_{h,\omega_P}=|\eta|_{h_F,\omega_P}$. Since $|\eta|_{h_F,\omega_P}\lesssim   \prod_{j=1}^{\ell}|z_j|^{\ep}$,  
		$|\eta|_{h(N),\omega_P}\leq C'$ for some $C'>0$.  Hence $\lVert \eta \rVert_{h(N), \omega_P}<\infty$. Thanks to  the Demailly-H\"ormander $L^2$-estimate \cite[Th\'eor\`eme 4.1 and Remarque 4.2]{Dem82},  there exists $   \sigma\in  \sC^\infty(U, K_U\otimes E)= \sC^\infty(U, F)$  such that
		 $
		\db \sigma=\eta
		$ 
		and $\lVert \sigma\rVert_{h(N)}<\infty$.   
Here we note that the smoothness of $\sigma$ follows from the elliptic regularity of the Laplacian.  
The lemma is proved.
	 \end{proof}
	
	\begin{lem}\label{lem:estimate}
	Let $(E, h)$ be  a  hermitian vector bundle on  $U$ such that $|R_{h}(E)|\leq C\omega_P$ for some constant $C>0$.  Assume that $\sigma \in H^0(U,E)$ such that $\lVert\sigma\rVert_{h(N)}<\infty$ for some integer $N\geq 1$,  where $h(N):=h\cdot e^{-N\varphi}$ with $\varphi$ defined in \eqref{eq:potent}, then  over $U(\frac{1}{2})$, $|\sigma|_{h}\lesssim \prod_{j=1}^{\ell}|z_j|^{-\ep}$ for any $\ep>0$ .
		\end{lem}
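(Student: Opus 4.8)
The plan is to derive a pointwise bound on $|\sigma|_h$ from the finiteness of the $L^2$-norm $\lVert\sigma\rVert_{h(N)}$ by exploiting the subharmonicity of $|\sigma|_h^2$ (or rather $\log|\sigma|_h$) coming from the curvature bound, combined with a sub-mean-value inequality on the polydisc in the Poincaré metric. First I would record that since $\nabla''\sigma = 0$ and $|R_h(E)|\leq C\omega_P$, a Bochner–Kodaira / Nakano-type computation gives $\Delta_{\omega_P}\log|\sigma|_h \geq -C'$ away from the zero set of $\sigma$ (and $\log|\sigma|_h$ is everywhere $\omega_P$-subharmonic up to this bounded term, in the distributional sense), so $u := \log|\sigma|_h - C'' \psi$ is $\omega_P$-subharmonic for a suitable bounded potential $\psi$; on shrinking we may as well work with a genuine subharmonic function $u$ satisfying $u \leq \log|\sigma|_h + O(1)$ and $\log|\sigma|_h \leq u + O(1)$.

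Next, the finiteness of $\lVert\sigma\rVert_{h(N)}^2 = \int_U |\sigma|_h^2\, e^{-N\varphi}\, d\mathrm{vol}_{\omega_P}<\infty$ translates, via $e^{-N\varphi} = \prod_{j=1}^\ell(-\log|z_j|^2)^{N}\cdot\prod_{k>\ell}\exp(-N|z_k|^2)$ and the explicit form of $d\mathrm{vol}_{\omega_P}$, into an $L^2$ bound on $|\sigma|_h$ against a weight that, near $D$, behaves like $\prod_j |z_j|^{-2}(-\log|z_j|^2)^{N-2}$ times Lebesgue measure. I would then fix a point $z^0\in U(\tfrac12)$ near $D$, say with $|z_j^0|=r_j$ small, and apply the sub-mean-value inequality for the subharmonic function $u$ on a Poincaré-metric ball of fixed radius around $z^0$ — in the coordinates adapted to the cusp this ball is comparable to a polydisc of Euclidean polyradius $\sim (r_1|\log r_1|^{-1},\ldots)$ in the degenerate directions and fixed size in the others. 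Averaging $u$ over that ball and re-exponentiating yields $|\sigma(z^0)|_h^2 \lesssim \frac{1}{\mathrm{vol}_{\omega_P}(B)}\int_B |\sigma|_h^2\, d\mathrm{vol}_{\omega_P}$; here $\mathrm{vol}_{\omega_P}(B)$ is bounded below by a constant (Poincaré balls of fixed radius have fixed volume), and the numerator is controlled by the $L^2$ bound: on such a ball the weight $\prod_j(-\log|z_j|^2)^{N}$ is $\gtrsim (\log r_j^{-1})^{N}$, which forces $\int_B|\sigma|_h^2\, d\mathrm{vol}_{\omega_P}\lesssim \prod_j(\log r_j^{-1})^{-N}\to 0$, and in particular is $\lesssim \prod_j r_j^{\ep}$ for any $\ep>0$ since logarithmic factors beat any power of $r_j$. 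This gives $|\sigma(z^0)|_h \lesssim \prod_j|z_j^0|^{-\ep}$ — in fact a much stronger bound — which is what is claimed; a standard cutoff/finite-cover argument handles points away from $D$ where the statement is trivial.

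The main obstacle I expect is making the sub-mean-value step fully rigorous: one must (i) justify the distributional inequality $\Delta_{\omega_P}\log|\sigma|_h \geq -C'$ uniformly near the divisor, handling the zeros of $\sigma$ (standard, since $\log|\sigma|_h$ is locally the sum of a pluriharmonic/subharmonic piece and a bounded piece, or one works with $\log(|\sigma|_h^2+\delta)$ and lets $\delta\to0$), and (ii) control the geometry of Poincaré balls uniformly as $z^0$ approaches $D$ — i.e., that a $\omega_P$-geodesic ball of fixed radius centered at $z^0$ is, after the biholomorphism $z_j\mapsto \log z_j$ uniformizing each punctured-disc factor to a half-plane, comparable to a fixed Euclidean ball there, with controlled volume and on which the relevant weights are essentially constant. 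Both are routine for the Poincaré metric but require care to state with uniform constants. Alternatively, and perhaps more cleanly, one can bypass mean-value inequalities entirely and invoke the submean property through the pulled-back picture on $\bH^\ell\times\Delta^q$ where $\omega_P$ becomes a product of standard metrics, proving the estimate there and pushing it down; I would present whichever is shorter.
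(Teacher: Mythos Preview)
Your approach is essentially correct and yields the result, but it differs from the paper's argument in one key technical choice. The paper does not work with $\omega_P$-subharmonicity and Poincar\'e balls at all. Instead it twists the metric in the \emph{opposite} direction: using the curvature bound it chooses $N'\gg 1$ so that $(E,h(-N'))$ with $h(-N')=h\cdot e^{N'\varphi}$ is Griffiths semi\-negative, which makes $\log|\sigma|^2_{h(-N')}$ genuinely plurisubharmonic in the \emph{Euclidean} sense on $U$. Then the classical sub-mean-value inequality is applied on Euclidean polydiscs $\Omega_z=\{|w_i-z_i|\le |z_i|/2\text{ for }i\le\ell,\ |w_i-z_i|\le\tfrac12\text{ for }i>\ell\}$, followed by Jensen's inequality and a direct comparison between $d\mathrm{vol}_g$ and $d\mathrm{vol}_{\omega_P}$, to bound $|\sigma(z)|_{h(-N')}$ by the $L^2$-norm $\lVert\sigma\rVert_{h(N)}$. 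Undoing the twist produces the factor $\prod_j(-\log|z_j|^2)^{N'/2}$ and hence the claimed $\prod_j|z_j|^{-\ep}$ bound. This avoids entirely the analysis of Poincar\'e geodesic balls and the Riemannian sub-mean-value machinery you outline; what it buys is that all the geometric input is absorbed into the single algebraic step of making the curvature seminegative, after which everything is standard Euclidean potential theory.

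One small slip in your write-up: the inequality $\prod_j(\log r_j^{-1})^{-N}\lesssim \prod_j r_j^{\ep}$ is false (logarithmic decay is slower than any polynomial), but this is harmless since $\prod_j(\log r_j^{-1})^{-N}$ is in any case bounded, which already implies $|\sigma|_h\lesssim\prod_j|z_j|^{-\ep}$.
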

	\begin{proof}
Since $|R_{h}(E)|\leq C\omega_P$ for some constant $C>0$, it follows from  \cite[Lemma 1.10]{DH19}   that  $(E,h(-N'))$ is Griffiths semi-negative for some $N'\gg 1$, where $h(-N'):=h\cdot e^{N'\varphi}$ with $\varphi$ defined in \eqref{eq:potent}.  One can show that   $\log |\sigma|^2_{h(-N')}$ is a plurisubharmonic function.  
		For any $z\in U^*(\frac{1}{2})$, one has  
		\begin{align*}
			\log |\sigma(z)|^2_{h(-N')}&\leqslant  \frac{4^n}{\pi^n \prod_{i=1}^{\ell}|z_i|^2}\int_{\Omega_z}  \log |\sigma(w)|^2_{h(-N')}d\mbox{vol}_{g}\\
			&\leqslant \log \big(\frac{4^n}{\pi^n \prod_{i=1}^{\ell}|z_i|^2}\cdot \int_{\Omega_z}   |\sigma(w)|^2_{h(-N')}d\mbox{vol}_{g}\big)\\
			&\leqslant \log \big(C \int_{\Omega_z}    \frac{1}{  \prod_{i=1}^{\ell}|w_i|^2}|\sigma(w)|^2_{h(-N')}d\mbox{vol}_{g}\big)\\
			&\leqslant C_1+ \log \int_{\Omega_z}   |\sigma(w)|^2_{h(-N')}\cdot |\prod_{i=1}^{\ell}(\log |w_i|^2)^2|  d\mbox{vol}_{\omega_P}\\
			& \leqslant  C_2+\log   \int_{\Omega_z}   |\sigma(w)|^2_{h(N)}d\mbox{vol}_{\omega_P}\\
			&\leqslant C_2+\log \lVert \sigma\rVert_{h(N)}^2 
		\end{align*}
		where $\Omega_z:=\{w\in U^*\mid |w_i-z_i|\leq \frac{|z_i|}{2} \mbox{ for } i\leq \ell;   |w_i-z_i|\leq \frac{1}{2} \mbox{ for } i>\ell\}$ and $g$ is the Euclidean metric.  $C_1, C_2 $ are two positive constants which do not depend on $z\in U^*(\frac{1}{2})$. The first inequality is due to the   mean value inequality, and the second one follows from the Jensen inequality.    It follows that
		\begin{align*}
			|\sigma(z)|_{h}&=|\sigma(z)|_{h(-N')}\cdot  \big(\prod_{j=1}^{\ell}(-\log |z_j|^2)^{\frac{N'}{2}}  \cdot \prod_{k=\ell+1}^{n}\exp(|z_k|^2)^{-\frac{N'}{2}}\big) \\ 
			&\leq \exp(\frac{C_2}{2})\cdot \lVert \sigma\rVert_{h(N)}\cdot   \big(\prod_{j=1}^{\ell}(-\log |z_j|^2)^{\frac{N'}{2}}  \cdot \prod_{k=\ell+1}^{n}\exp(|z_k|^2)^{-\frac{N'}{2}}\big)\lesssim  \prod_{i=1}^{\ell}|z_i|^{-\ep}
		\end{align*}
		for any $\ep>0$. 
	\end{proof}  
	\subsection{Proof of \Cref{main}}
	We first prove that the Deligne extension of the flat bundle unerlying a complex variation of Hodge structure coincides with the prolongation defined in \cref{sec:pro}.
	 \begin{proposition}\label{lem:equal}
Let $X$ be a complex manifold, $D=\sum_{i=1}^{\ell}D_i$ be a   simple normal crossing divisor. For a complex variation of Hodge structure $(V,\nabla,F^\bullet,h)$ defined on $U:=X\backslash D$, one has $V_{\bm{\alpha}}^{\rm Del}=\mathcal{P}_{\! \bm{\alpha}}V$ for any multi-index $\bm{\alpha}\in \bR^\ell$, where $\mathcal{P}_{\! \bm{\alpha}}V$  is the prologation of $V$ defined in \cref{sec:pro}. 
	 \end{proposition}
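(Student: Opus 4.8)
The plan is to reduce to the local model $X=\Delta^n$, $D=(z_1\cdots z_\ell=0)$, and to show the two prolongations agree by a double inclusion of the corresponding sheaves of sections. The Deligne extension $V_{\bm\alpha}^{\rm Del}$ is generated near a point of $D$ by the twisted multivalued flat sections $\tilde v(t)=\prod_i t_i^{-\beta_i}\exp(-\sum_i N_i\log t_i)\,v(t)$ of \eqref{eq:twist}, where $\beta_i\in(\alpha_i-1,\alpha_i]$; the prolongation $\mathcal P_{\!\bm\alpha}V$ consists of holomorphic sections $\sigma$ of $V|_{U}$ with $|\sigma|_h\lesssim\prod_i|z_i|^{-\alpha_i-\ep}$ for all $\ep>0$. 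So the whole proposition is a statement comparing the Hodge-norm growth of flat sections with their monodromy weights.

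The first and main step is the norm estimate for the generating sections $\tilde v_k$: for a multivalued flat section $v$ lying in $\bE_{\bm\lambda}$ one needs
\[
C^{-1}\prod_{i=1}^{\ell}|z_i|^{-\beta_i}\bigl(-\log|z_i|^2\bigr)^{-M}\ \le\ |\tilde v|_h\ \le\ C\prod_{i=1}^{\ell}|z_i|^{-\beta_i}\bigl(-\log|z_i|^2\bigr)^{M}
\]
over $U$, for some $C,M>0$; equivalently, $\exp(\sum_i N_i\log t_i)v$ has Hodge norm comparable to a power of $-\log|z_i|^2$. This is the analogue in the $\bC$-VHS setting of the norm estimates of Schmid/Cattani–Kaplan–Schmid, and it is exactly where the hypothesis that $(V,\nabla,F^\bullet,Q)$ is a \emph{polarized} $\bC$-VHS enters. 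I expect this is the hard part. I would obtain it from the acceptability of $(V,h)$ (\cref{lem:acceptable}, applied to $F^0=V$, or directly from \cref{thm:moc} giving $|\theta|_{h,\omega_P}\le C$): acceptability of $(V,h)$ together with flatness of $\nabla$ forces the Hodge metric, written in a flat multivalued frame, to be adapted up to log order after the $t^{-\beta_i}$-twist — this is essentially \cite[\S2.2.2 and its consequences]{Moc07}, i.e. one compares $h$ with the tame/adapted model provided by the nilpotent parts $N_i$. Concretely, one writes $\nabla h = 0$ to see that $H(h,\bm v)$ varies only through the $\log|z_i|$-directions once the $\beta_i$-twist is removed, and then controls the polynomial (in $\log|z_i|$) growth of $\exp(\sum N_i\log t_i)$ and its inverse using that the $N_i$ are nilpotent and commute.

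Granting that estimate, the two inclusions are formal. For $V_{\bm\alpha}^{\rm Del}\subseteq\mathcal P_{\!\bm\alpha}V$: the generators $\tilde v_k$ are holomorphic on $U$ (checked after \eqref{eq:twist}) and by the upper bound $|\tilde v_k|_h\lesssim\prod_i|z_i|^{-\beta_i}(-\log|z_i|^2)^M\lesssim\prod_i|z_i|^{-\beta_i-\ep}\le\prod_i|z_i|^{-\alpha_i-\ep}$ for every $\ep>0$ (using $\beta_i\le\alpha_i$ and that any positive power of $-\log|z_i|^2$ is absorbed by $|z_i|^{-\ep}$), so each $\tilde v_k\in\mathcal P_{\!\bm\alpha}V(\Omega)$; since $\mathcal P_{\!\bm\alpha}V$ is a locally free $\cO_X$-module (acceptability plus \cref{thm:Moc2}) and the $\tilde v_k$ are an $\cO_\Omega$-basis of $V_{\bm\alpha}^{\rm Del}$, the inclusion of sheaves follows. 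For the reverse inclusion $\mathcal P_{\!\bm\alpha}V\subseteq V_{\bm\alpha}^{\rm Del}$: let $\sigma$ be a holomorphic section of $V$ on $\Omega^*$ with $|\sigma|_h\lesssim\prod_i|z_i|^{-\alpha_i-\ep}$ for all $\ep>0$; expand $\sigma=\sum_k f_k(z)\,\tilde v_k$ with $f_k$ holomorphic on $\Omega^*$, and use the \emph{lower} bound on $|\tilde v_k|_h$ together with almost-orthogonality (or a Cramer-type inversion of the Gram matrix $H(h,\tilde{\bm v})$, whose entries and whose inverse have at most log-power size by the estimate) to deduce $|f_k(z)|\lesssim\prod_i|z_i|^{-\alpha_i-\ep+\beta_i}\le\prod_i|z_i|^{-\alpha_i+\beta_i-\ep}$; since $\beta_i>\alpha_i-1$, the exponent $-\alpha_i+\beta_i$ is $>-1$, so $f_k$ extends holomorphically across $D$ by the Riemann extension theorem (an $L^2$ or bounded-after-$|z_i|^{\ep}$ argument). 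Hence $\sigma\in V_{\bm\alpha}^{\rm Del}(\Omega)$. Combining, the two locally free extensions coincide inside $j_*V$, which is the claim; and everything is independent of the chosen admissible coordinate since both sides are defined intrinsically. The one genuinely nontrivial ingredient, again, is the two-sided log-order norm estimate for $\tilde v$, and I would isolate it as a lemma before the proof of the proposition.
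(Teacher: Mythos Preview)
Your overall architecture is right and coincides with the paper's: localize, prove a two-sided ``adapted up to log order'' estimate for the twisted frame $\tilde v_k$, and deduce the two inclusions exactly as you describe. The inclusions themselves, once the matrix estimate \eqref{eq:double} is in hand, are argued in the paper precisely along the lines of your last paragraph.

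There is, however, a genuine gap in the mechanism you sketch for the norm estimate. The claim ``one writes $\nabla h=0$'' is false: the polarization form $Q$ is $\nabla$-parallel, but the Hodge metric $h$ is not (it depends on the varying Hodge decomposition). So you cannot argue that $H(h,\bm v)$ ``varies only through the $\log|z_i|$-directions'' by flatness; that route does not work. The paper gets the \emph{upper} log-order bound by invoking Mochizuki's general harmonic-bundle estimate \cite[Lemma~9.31]{Moc07}, which only says that \emph{some} rescaling $v_i'=v_i\prod_j|t_j|^{-a_{ij}}$ is adapted up to log order for certain real $a_{ij}$; one then has to identify the $a_{ij}$. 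The paper does this by freezing all but one variable and appealing to the one-dimensional Hodge-norm estimate of \cite{Sim90,SS22}, which forces each $a_{ij}=0$. Your sketch does not contain this identification step.

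For the \emph{lower} bound your ``Cramer-type inversion'' is not enough: an upper bound on the entries of $H(h,\tilde{\bm v}')$ does not by itself bound its inverse, and a pointwise lower bound on each $|\tilde v_k|_h$ only controls the diagonal. The paper's device is to pass to the dual $\bC$-VHS $(V^*,\nabla,h^*)$: one checks via $\tilde T_j=(T_j^*)^{-1}$ that the frame $\tilde v_1^*,\ldots,\tilde v_r^*$ built from the dual flat sections is exactly the dual of $\tilde v_1,\ldots,\tilde v_r$, then applies the \emph{same upper} bound to the dual to obtain $H(h^*;\mu')\lesssim(\prod|\log|t_i||)^{M'}$, which inverts to the desired lower bound $H(h;\tilde{\bm v}')\gtrsim(\prod|\log|t_i||)^{-M'}$. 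If you want to isolate a lemma, this duality step (together with the linear-algebra \cref{lem:linear}) is the one to isolate, not a direct two-sided estimate.
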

 \begin{proof}
\noindent {\it Step 1: we prove that $V_{\bm{\alpha}}^{\rm Del}\subset \mathcal{P}_{\! \bm{\alpha}}V$. }  We will use the notation in \cref{sec:Del}.	Since this is a local problem, we can assume that $X=\Delta^n$ and $D=(t_1\cdots t_p=0)$.  By the construction of  $V_{\bm{\alpha}}^{\rm Del}$ one can take a basis $v_1,\ldots,v_r$ of $V^\nabla$ with each $v_i\in \bE_{\bm{\lambda}(v_i)}$ for some $\bm{\lambda}(v_i)\in Sp$ such that  $\{\tilde{v}_1,\ldots,\tilde{v}_r\}$ defined in \eqref{eq:twist}  forms a basis of $V_{\bm{\alpha}}^{\rm Del}$.  It thus suffices to estimate the norm
 \begin{align*} 
	\tilde{v}(t):=\exp\big(- \sum_{i=1}^{p}( \beta_iI+N_i)\cdot \log t_i  \big)v(t) = \prod_{i=1}^{p}t_i^{-\beta_i}\exp\big(- \sum_{i=1}^{p}N_i\cdot \log t_i  \big)v(t)
\end{align*}
for any $\bm{\lambda}$ and $v\in \bE_{\bm{\lambda}}$.

	By the weaker norm estimate in \cite[Lemma 9.31]{Moc07} for general harmonic bundles, there exists a frame   $v_1,\ldots,v_r$ of $V^\nabla$ with each $v_i\in \bE_{\bm{\lambda}(v_i)}$   and $\{a_{ij}\}_{i=1,\ldots,r; j=1,\ldots,p}\subset  \bR$  such that if we put 
$
 {v}'_i:=v_i\cdot \prod_{j=1}^{p}|t_j|^{-a_{ij}},
$ 
then for the multivalued smooth sections  ${\bm{v}}'=({v}'_1,\ldots, {v}'_r)$,  over a given  sector of  $U$  one has the   norm estimate
\begin{align}\label{eq:norm}
	 (\prod_{i=1}^p|\log |t_i||)^{-M}\lesssim H(h, {\bm{v}'}) \lesssim (\prod_{i=1}^p|\log |t_i||)^M
\end{align}  
for some $M>0$.  Here  $h$ is the Hodge metric and   $H(h, {\bm{v}'}) $ is the $H(r)$-valued function   defined in \cref{sec:adapt}. 

 Fix some  $\{t_1,\ldots,t_{p}\}\subset  \Delta^*$. For each $\ell=1,\ldots,p$,   over any sector of $\Delta^*$,  we have
$$  |\log |t||)^{-M}\lesssim
|v_i|^2(t_1,\ldots,t_{\ell-1},t,t_{\ell+1},\ldots,t_n) \lesssim  |\log |t||^M$$ 
by the Hodge norm estimate in one dimensional case in \cite{SS22,Sim90}. Together with \eqref{eq:norm}, it implies that $a_{ij}=0$ for all ${i=1,\ldots,r; j=1,\ldots,\ell}$.  Therfore, we conclude that over a given  sector of  $U$  one has the   norm estimate
\begin{align} 
	(\prod_{i=1}^p|\log |t_i||)^{-M}\lesssim H(h,{\bm{v}}) \lesssim (\prod_{i=1}^p|\log |t_i||)^M.
\end{align}  
Then for any multivalued flat section $v\in V^\nabla$,  over any given  sector of  $U$ one has  
$$ (\prod_{i=1}^p|\log |t_i||)^{-M}\lesssim|v(t)|_h\lesssim (\prod_{i=1}^p|\log |t_i||)^M$$
for some $M>0$.  Since all $N_i$ are nilpotent and pairwise commute, 
 $$
\exp\big(- \sum_{i=1}^{p}N_i\cdot \log t_i  \big)v(t)=\sum_{i=1}^{p}\sum_{k=0}^{N}\frac{1}{k!}(\log t_i)^k(N_i^k v)(t)
$$ 
for some integer $N>0$. Note that if $v\in \bE_{\bm{\lambda}}$, we also have $N_i^k v\in \bE_{\bm{\lambda}} $ for any $k\geq 0$.   Since one can cover $X\backslash D$ by finitely many sectors,   this proves the norm estimate
$$
 |\exp\big(- \sum_{i=1}^{p}N_i\cdot \log t_i  \big)v(t)|_h\lesssim (\prod_{i=1}^p|\log |t_i||)^{M'} 
$$
for some $M'>0$. 
Hence $$|\tilde{v}(t)|_h  \lesssim \prod_{i=1}^{p}|t_i|^{-\beta_i-\ep}\lesssim\prod_{i=1}^{p}|t_i|^{-\alpha_i-\ep}$$ for any $\ep>0.$  This proves the inclusion $V_{\bm{\alpha}}^{\rm Del}\subset \mathcal{P}_{\! \bm{\alpha}}V$ by the   definition of $\pa V$ in \cref{sec:pro}.

\medspace

\noindent {\it Step 2: we prove that  $\mathcal{P}_{\! \bm{\alpha}}V\subset V_{\bm{\alpha}}^{\rm Del}$.}   First we note that the decomposition $V^\nabla=\oplus_{\bm{\lambda}\in Sp}\bE_{\bm{\lambda}}$  induces a decomposition of the flat bundle $(V,\nabla)$ into 
\begin{align} \label{eq:decom}
(V,\nabla)=\oplus_{\bm{\lambda}\in Sp}(V(\bm{\lambda}), \nabla|_{V(\bm{\lambda})}),
\end{align}
where $(V(\bm{\lambda}), \nabla|_{V(\bm{\lambda})})$ is the flat subbundle induced by $\bE_{\bm{\lambda}}$. 
We fix a basis $(v_1,\ldots,v_r)\in V^\nabla$ such that  $v_i\in \bE_{\bm{\lambda}(v_i)}$ for some $\bm{\lambda}(v_i) \in Sp$. This means that  such basis is compatible with the above decomposition \eqref{eq:decom}; namely $v_j$ is a mutivalued flat section of $(V(\bm{\lambda}(v_j)), \nabla|_{V(\bm{\lambda}(v_j))})$.  Consider the dual  bundle $V^*$ of $V$,  and it can endowed with the natural connection $\nabla$   defined by
$$
(\nabla\mu)v=d(\mu(v))-\mu(\nabla(v))
$$
for $\mu$ and $v$  sections in $V^*$ and $V$ respectively. $(V^*,\nabla)$ is thus also a  flat bundle.  Moreover,   $(V^*)^\nabla$ is the dual space of $(V^\nabla)$.   Consider the dual basis $(v_1^*,\ldots,v^*_r)$ of $(v_1,\ldots,v_r)$. Since
$$
(\nabla v_i^*)v_j=d(v_i^*(v_j))-v_i^*(\nabla v_j)=0,
$$
one has 
$v_i^*\in (V^*)^\nabla$.  Recall that  $T_j$ is the monodromy transformation of $(V,\nabla)$ with respect to $\gamma_j$ defined by  
$$
v(t_1,\ldots,e^{2\pi i}t_j,\ldots,t_{p+q})=(T_j v)(t_1,\ldots,t_{p+q})
$$
for any $v\in V^\nabla$. Let us denote by $\tilde{T}_j$ the monodromy transformation of $(V,\nabla)$ with respect to $\gamma_j$ defined by  
$$
\mu(t_1,\ldots,e^{2\pi i}t_j,\ldots,t_{p+q})=(\tilde{T}_j \mu)(t_1,\ldots,t_{p+q})
$$
for any $\mu\in (V^*)^\nabla$. Then  for any $v\in V^\nabla$ and any $\mu\in (V^*)^\nabla$ one has
\begin{align*}
\mu(t)(v(t))&=\mu(t_1,\ldots,e^{2\pi i}t_j,\ldots,t_{p+q})(v(t_1,\ldots,e^{2\pi i}t_j,\ldots,t_{p+q}))\\
&=(\tilde{T}_j\mu(t))(T_jv(t))=(\tilde{T}_j\mu)(T_j v)=(T_j^*\tilde{T}_j \mu)(v),
\end{align*}
where $T_j^*:(V^*)^\nabla\to (V^*)^\nabla$ is the adjoint of $T_j$.
Hence 
\begin{align}\label{eq:mono}
\tilde{T}_j=(T_j^*)^{-1}. 
\end{align}
It follows that $Sp(\tilde{T}_j)=\{\lambda^{-1} \}_{\lambda\in Sp(T_i)}$. Set $\bE(\tilde{T}_j,\lambda_j)\subset (V^*)^\nabla$ to be the corresponding   eigenspace of $\lambda_j\in Sp(\tilde{T}_j)$.  We know that all $\lambda_j\in Sp(\tilde{T}_j)$ have norm $1$ since   $(V^*,\nabla)$ admits a complex variation of Hodge structure.  For $\bm{\lambda}=(\lambda_1,\ldots,\lambda_p)\in Sp$, we define
$$
\tilde{\bE}_{\bm{\lambda}}:=\cap_{j=1}^{p}\bE(\tilde{T}_j,\lambda^{-1}_j)\subset (V^*)^\nabla
$$
Since $T_j's$ are pairwise commute,  one has
$$
(V^*)^\nabla=\oplus_{\bm{\lambda}\in Sp} \tilde{\bE}_{\bm{\lambda}},
$$
and $\tilde{E}_{\bm{\lambda}}$ is an invariant subspace of $\tilde{T}_j$ for any $\bm{\lambda}\in Sp$ and any $j$.
 
By  \cref{lem:linear} below, one can show that for any $\mu\in \tilde{\bE}_{\bm{\lambda'}}$ and $v\in \bE_{\bm{\lambda}}$, $\mu(v)=0$ if  $\bm{\lambda}\neq \bm{\lambda}'$, which implies that 
 $
 v_j^*\in  \tilde{\bE}_{\bm{\lambda}(v_j)}.
 $


For $\bm{\lambda}\in Sp$, there exists a unique $\beta_i\in (\alpha_i-1, \alpha_i]$ such that $\exp(2\pi i \beta_i)=\lambda_i$.  Denote  $N_i:=\frac{\log(	\lambda_i^{-1}T_i|_{ \bE_{\bm{\lambda}}})}{2\pi i}$. Recall that  for any $v\in \bE_{\bm{\lambda}}$,  we define
\begin{align*} 
	\tilde{v}(t):=\exp\big(- \sum_{i=1}^{p}( \beta_iI+N_i)\cdot \log t_i  \big)v(t) = \prod_{i=1}^{p}t_i^{-\beta_i}\exp\big(- \sum_{i=1}^{p}N_i\cdot \log t_i  \big)v(t). 
\end{align*} 
 Likewise, since  $\lambda_i\tilde{T}_i|_{ \tilde{\bE}_{\bm{\lambda}}}$ is unipotent, its logarithm can be defined as
$$
\log(	\lambda_i \tilde{T}_i|_{ \tilde{\bE}_{\bm{\lambda}}}):=\sum_{k=1}^{\infty}(-1)^{k+1}\frac{(\lambda_i\tilde{T}_i|_{ \tilde{\bE}_{\bm{\lambda}}}-I)^k}{k}.
$$
Write  $\tilde{N}_i:=\frac{\log(	\lambda_i \tilde{T}_i|_{ \tilde{\bE}_{\bm{\lambda}}})}{2\pi i}$.  Then for any $\mu\in\tilde{ \bE}_{\bm{\lambda}}$, we define 
\begin{align} \label{eq:defg}
	\tilde{\mu}(t):=\exp\big(- \sum_{i=1}^{p}( -\beta_iI+\tilde{N}_i)\cdot \log t_i  \big)\mu(t) = \prod_{i=1}^{p}t_i^{\beta_i}\exp\big(- \sum_{i=1}^{p}\tilde{N}_i\cdot \log t_i  \big)\mu(t). 
\end{align}
Since $\tilde{T}_i=(T_i^*)^{-1}$, one has
 $
\tilde{N}_j=-N_j^* 
$. Therefore,
\begin{align*} 
	\tilde{\mu}(t)(\tilde{v}(t))&=\exp\big(- \sum_{i=1}^{p}\tilde{N}_i\cdot \log t_i  \big)\mu(t)\Big(\exp\big(- \sum_{i=1}^{p}N_i\cdot \log t_i  \big)v(t)\Big) \\
	&=\exp\big(\sum_{i=1}^{p} {N}^*_i\cdot \log t_i  \big)\mu(t)\Big(\exp\big(- \sum_{i=1}^{p}N_i\cdot \log t_i  \big)v(t)\Big) \\
	&= \mu(v)=\mbox{constant}.
\end{align*}
This implies that  $\tilde{v}_i^*(t)(\tilde{v}_j(t))=v_i^*(v_j)\equiv \delta_{ij}$ if $\bm{\lambda}=\bm{\lambda}'$, where $\tilde{v}^*_i$ is defined in \eqref{eq:defg} in terms of $
v_i^*\in  \tilde{\bE}_{\bm{\lambda}(v_i)}. $  

If $\mu\in\tilde{ \bE}_{\bm{\lambda}}$ and $v\in \bE_{\bm{\lambda}'}$ with $\bm{\lambda}\neq \bm{\lambda}'$, the above construction shows that
 $
\tilde{\mu}
$ and $\tilde{v}$ are holomorphic sections of $V^*(\bm{\lambda})$ and $V(\bm{\lambda}')$. Here $V(\bm{\lambda}')$ is the invariant flat subbundle of $(V,\nabla)$ defined in \eqref{eq:decom}, and $V^*(\bm{\lambda})$  is defined to be the invariant flat subbundle of $(V^*,\nabla)$ generated by $\tilde{E}_{\bm{\lambda}}$.  
Hence $\tilde{v}^*_i$ and $\tilde{v}_j$ are holomorphic sections of $V^*(\bm{\lambda}(v_i))$ and $V(\bm{\lambda}(v_j))$ respectively.  This shows that
$\tilde{v}_i^*(t)(\tilde{v}_j(t))\equiv 0$ for $\bm{\lambda}\neq \bm{\lambda}'$ by \cref{lem:linear}.

In conclusion, we prove that $\tilde{v}^*_1,\ldots, \tilde{v}^*_r$ is the dual frame of $\tilde{v}_1,\ldots,\tilde{v}_r$.   

Recall that    $v_j\in \bE_{\bm{\lambda}(v_j)}$ for some $\bm{\lambda}(v_j)\in Sp$.  There exists a unique $\beta(v_j)_i\in (\alpha_i-1, \alpha_i]$ such that $\exp(2\pi i \beta(v_j)_i)=\bm{\lambda}(v_j)_i$.  Define a smooth section $\tilde{v}_j'=\tilde{v}_j\cdot \prod_{i=1}^p |t_i|^{\beta(v_j)_i}$. By the norm estimate in the first step, for all $\tilde{v}_j'$ one has the norm estimate
$$
|\tilde{v}_j'|_h\lesssim (\prod_{i=1}^p|\log |t_i||)^{M} 
$$
for some $M>0$. It follows that  
$$
  H(h; \tilde{v}'_1,\ldots,\tilde{v}_r')\lesssim (\prod_{i=1}^p|\log |t_i||)^{M} 
$$
Here $H(h; \tilde{v}_1',\ldots, \tilde{v}_r')$ is a $r\times r$-matrix function whose $(i,j)$-component is $h(\tilde{v}_i',\tilde{v}_j')$.   On the other hand,  we put  $\mu'_i=\tilde{v}^*_i\cdot \prod_{j=1}^p |t_j|^{-\beta(v_i)_j}$.  Since  complex polarized variation of Hodge structure is functorial by taking dual, $(V^*,\nabla)$ admits a complex polarized variation of Hodge structure whose Hodge metric is the dual metric $h^*$ of the Hodge metric $h$ for $(V,\nabla,F^\bullet,Q)$. In the same manner we obtain
$$
|\mu_i'|_{h^*}\lesssim (\prod_{i=1}^p|\log |t_i||)^{M'} 
$$
for every $\mu_i'$ and some $M'>0$.  This implies that 
$$
H(h^*; \mu'_1,\ldots,\mu'_r)\lesssim (\prod_{i=1}^p|\log |t_i||)^{M'} 
$$
By our construction, $\mu'_1,\ldots,\mu'_r$ is the dual of the smooth frame $\tilde{v}'_1,\ldots,\tilde{v}'_r$. It follows that
$$
(\prod_{i=1}^p|\log |t_i||)^{-M'}\lesssim H(h^*; \mu'_1,\ldots,\mu'_r)^{-1}= H(h; \tilde{v}'_1,\ldots,\tilde{v}_r').
$$ Hence 
\begin{align}\label{eq:double}
(\prod_{i=1}^p|\log |t_i||)^{-M'}\lesssim H(h; \tilde{v}'_1,\ldots,\tilde{v}_r')\lesssim (\prod_{i=1}^p|\log |t_i||)^{M}. 
\end{align}
 Now we are ready to prove the inclusion  $\mathcal{P}_{\! \bm{\alpha}}V\subset V_{\bm{\alpha}}^{\rm Del}$. For any $s\in \pa V(U)$,   it can be written as
  $
 s=\sum_{i=1}^rf_i \tilde{v}_i
 $ 
 where $f_i$ is a holomorphic function on $U$. By \eqref{eq:double} one has
 $$
  \sum_{i=1}^r |f_i|^2\cdot \prod_{j=1}^{p}|t_j|^{-2\beta(v_i)_j} \cdot (\prod_{k=1}^p|\log |t_k||)^{-2M'}\lesssim |s|^2_h\lesssim\prod_{i=1}^p|t_j|^{-2\alpha_j-\ep}
 $$
 for any $\ep>0$. 
Since $\beta(v_i)_j\in (\alpha_j-1,\alpha_j]$, one can choose $\delta>0$ such that $\beta(v_i)_j-\alpha+1> \delta$ for all $v_i$ and every $j=1,\ldots,p$. The above inequality implies that   for every $f_i$, 
 $$
 |f_i|\lesssim  \prod_{i=1}^p|t_j|^{-1+\delta}.
 $$
 Hence all $f_i$ extend to   holomorphic functions over $X$. This proves that $s\in V_{\bm{\alpha}}^{\rm Del}(X)$  since $\tilde{v}_1,\ldots, \tilde{v}_r$ is a holomorphic basis of $V_{\bm{\alpha}}^{\rm Del}$ by the definition of Deligne extension in \cref{sec:Del}.  The inclusion $\mathcal{P}_{\! \bm{\alpha}}V\subset V_{\bm{\alpha}}^{\rm Del}$ is proved. We complete the proof of the proposition. 
   \end{proof}
We leave the proof of the following lemma of linear algebra to the reader. 
\begin{lem}\label{lem:linear}
	Let $T:V\to V$ be an isomorphism of a finite dimensional $\bC$-vector space $V$. Decompose $V=V_{\lambda_1}\oplus \ldots\oplus V_{\lambda_k}$ into  eigenspaces of $T$, where $\lambda_i$ is a   eigenvalue of $T$ and $V_{\lambda_i}$ is the corresponding  eigenspace. Denote by $V^*$ the dual vector space. Then for the isomorphism $(T^*)^{-1}:V^*\to V^*$, its  eigenvalues are $\lambda_1^{-1},\ldots, \lambda_k^{-1}$ and its  eigenspace decomposition is $V^*=V^*_{\lambda^{-1}_1}\oplus \ldots\oplus V^*_{\lambda^{-1}_k}$, where $V^*_{\lambda^{-1}_j}$ is the corresponding  eigenspace of $\lambda_j^{-1}$. Moreover, one has
	$\mu(v)=0$ if $\mu\in  V^*_{\lambda^{-1}_i}$ and $v\in V_{\lambda_j}$ with $i\neq j$. \qed
\end{lem}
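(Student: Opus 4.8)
\textbf{Proof proposal for Lemma \ref{lem:linear}.}

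The plan is to reduce every assertion to the elementary behaviour of transpose maps under a direct sum decomposition, so that nothing beyond bookkeeping with annihilators is needed. First I would set $W_i:=V_{\lambda_i}$ and record the only structural facts used: each $W_i$ is $T$-invariant and $T|_{W_i}=\lambda_i\,\mathrm{id}_{W_i}+N_i$ with $N_i$ nilpotent, so $\lambda_i$ is the unique eigenvalue of $T|_{W_i}$; moreover $\lambda_i\neq 0$ since $T$ is an isomorphism. Dually I would introduce $W_i^{*}:=\{\mu\in V^{*}\mid \mu|_{W_j}=0 \text{ for all } j\neq i\}$, i.e.\ the annihilator of $\bigoplus_{j\neq i}W_j$. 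Restriction to $W_i$ identifies $W_i^{*}$ canonically with the ordinary dual $(W_i)^{\vee}$, and since a linear functional on $V$ is determined by, and may be prescribed arbitrarily on, each summand $W_i$, one gets the canonical decomposition $V^{*}=\bigoplus_i W_i^{*}$. This already yields the last assertion of the lemma: if $\mu\in W_i^{*}$ and $v\in W_j$ with $i\neq j$, then $\mu(v)=0$ by the very definition of $W_i^{*}$.

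Next I would verify that $(T^{*})^{-1}$ respects this decomposition and compute its eigenvalues. Since $TW_j\subseteq W_j$, for $\mu\in W_i^{*}$ and $v\in W_j$ with $j\neq i$ we have $(T^{*}\mu)(v)=\mu(Tv)=0$, hence $T^{*}\mu\in W_i^{*}$; thus $T^{*}$, and therefore $(T^{*})^{-1}$, preserves each $W_i^{*}$. Under the identification $W_i^{*}\cong (W_i)^{\vee}$, the operator $T^{*}|_{W_i^{*}}$ is precisely the transpose of $T|_{W_i}=\lambda_i\,\mathrm{id}+N_i$, namely $\lambda_i\,\mathrm{id}+N_i^{\mathsf t}$ with $N_i^{\mathsf t}$ nilpotent; consequently $(T^{*})^{-1}|_{W_i^{*}}=\lambda_i^{-1}(\mathrm{id}+\lambda_i^{-1}N_i^{\mathsf t})^{-1}$ has $\lambda_i^{-1}$ as its unique eigenvalue, the second factor being unipotent. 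Therefore the eigenvalues of $(T^{*})^{-1}$ are exactly $\lambda_1^{-1},\ldots,\lambda_k^{-1}$, and $W_i^{*}=V^{*}_{\lambda_i^{-1}}$ is the corresponding (generalized) eigenspace, giving the asserted decomposition $V^{*}=V^{*}_{\lambda_1^{-1}}\oplus\cdots\oplus V^{*}_{\lambda_k^{-1}}$.

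There is essentially no obstacle here: once the annihilator description of $W_i^{*}$ is in place, each claim follows in one line from the compatibility of transposition with the block decomposition. The only point worth a word of care is the reading of ``eigenspace'': if $T$ is not semisimple one must take $V_{\lambda_i}$ to be the generalized eigenspace $\ker(T-\lambda_i\,\mathrm{id})^{\dim V}$ for the direct sum decomposition to hold, but this is immaterial to the argument, since the sole property invoked is that $T|_{W_i}$ has $\lambda_i$ as its only eigenvalue. \qed
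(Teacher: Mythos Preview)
Your argument is correct and complete. The paper does not supply a proof of this lemma at all (it is stated with a trailing \qed\ and the preceding line reads ``We leave the proof of the following lemma of linear algebra to the reader''), so there is nothing to compare your approach against; your annihilator description $W_i^{*}=\operatorname{Ann}\bigl(\bigoplus_{j\neq i}W_j\bigr)$ is exactly the standard way to carry this out, and your closing remark that ``eigenspace'' must be read as ``generalized eigenspace'' is the one point worth flagging.
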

 
 \begin{thm}\label{thm:exact}
	Let $X$ be a complex manifold and let $D=\sum_{i=1}^\ell D_i$ be a simple normal crossing divisor on $X$.  Let $(V, \nabla, F^\bullet, Q)$ be a complex polarized variation of Hodge structure  of weight $m$ on $X\backslash D$. 	Let $\cP_{*}F^p$ and $\cP_{*} E_{p,m-p}$ be the  induced filtered bundle of hermitian bundles $(F^p, h_p)$ and $(E_{p,m-p}, h_{p,m-p})$ defined in \cref{sec:pro}. Then  for every $p$,  $\cP_{*}F^p$ and $\cP_{*} E_{p,m-p}$  are parabolic bundles and for each multi-index $\alpha\in \bR^\ell$, there is a  natural  exact sequence
	\begin{align}\label{eq:exact}
0\to \mathcal{P}_{\! \bm{\alpha}}F^{p+1}\to  \mathcal{P}_{\! \bm{\alpha}}F^p\to \pa E_{p,m-p}\to 0.
	\end{align}
\end{thm}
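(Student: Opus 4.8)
The plan is to combine the two main inputs already available: Mochizuki's theorem that prolongations of acceptable bundles are parabolic (\cref{thm:Moc2}), and \cref{lem:acceptable}, which tells us that $(F^p,h_p)$, $(F^{p+1},h_{p+1})$ and $(E_{p,m-p},h_{p,m-p})$ are all acceptable. Applying \cref{thm:Moc2} to each of these three hermitian bundles immediately gives that $\cP_* F^p$, $\cP_* F^{p+1}$ and $\cP_* E_{p,m-p}$ are parabolic bundles; so the only real content is the exact sequence \eqref{eq:exact}. Note that the quotient metric on $E^{p,m-p}=F^p/F^{p+1}$ induced by $h_p$ agrees with $h_{p,m-p}=(-1)^pQ$, because the $C^\infty$-splitting $V=\oplus V^{r,s}$ is $Q$-orthogonal; this identification is what makes the statement meaningful.

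First I would fix $x\in D$ and an admissible coordinate $(\Omega;z_1,\dots,z_n)$ simultaneously adapted to all the norm estimates above (shrinking to a common one), and work with $\cO_X$-modules on $\Omega$. The inclusion $F^{p+1}\hookrightarrow F^p$ on $U$ is norm-nonincreasing for the Hodge metrics (indeed $h_{p+1}=h_p|_{F^{p+1}}$), so it maps $\cP_{\bm\alpha}F^{p+1}$ into $\cP_{\bm\alpha}F^p$ by the very definition of prolongation via norm growth in \cref{sec:pro}; and this map is injective since it is injective on $U$ and the prolongations are subsheaves of $j_*$ of the respective bundles. Next, the projection $\pi:F^p\to E^{p,m-p}$ on $U$ sends a section $\sigma$ of norm $\lesssim \prod|z_i|^{-\alpha_i-\ep}$ to one of norm $\le |\sigma|_{h_p}$, again by the orthogonal-splitting remark, hence induces $\cP_{\bm\alpha}F^p\to \cP_{\bm\alpha}E_{p,m-p}$; its kernel is $\cP_{\bm\alpha}F^p\cap j_*F^{p+1}=\cP_{\bm\alpha}F^{p+1}$ (a section of $F^p$ landing in $F^{p+1}$ on $U$ and having the growth bound is by definition in $\cP_{\bm\alpha}F^{p+1}$, using $h_{p+1}=h_p|_{F^{p+1}}$). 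So the sequence is exact except possibly at the right, i.e. the remaining point is surjectivity of $\cP_{\bm\alpha}F^p\to \cP_{\bm\alpha}E_{p,m-p}$.

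The surjectivity is the main obstacle, and this is exactly where the $L^2$-machinery of \cref{thm:L2} and \cref{lem:estimate} enters. Given a local holomorphic section $s$ of $\cP_{\bm\alpha}E_{p,m-p}$ near $x$, I would first multiply by a suitable power $\prod z_i^{k_i}$ to reduce to the case where $s$ extends holomorphically across $D$ (possible since $\cP_*E_{p,m-p}$ is parabolic, hence each stalk is finitely generated and contained in some $\prod z_i^{-k_i}\cdot(\text{holomorphic})$), then lift $s$ to a $C^\infty$ section $\tilde s$ of $F^p$ over $U$ using a $C^\infty$ splitting of $0\to F^{p+1}\to F^p\to E^{p,m-p}\to 0$ on $X$; the obstruction $\eta:=\db\tilde s$ is a $\db$-closed $(0,1)$-form valued in $F^{p+1}$, and because $s$ is holomorphic the $(0,1)$-part only picks up the second fundamental form applied to $s$, which by \cref{thm:moc} (norm of $\theta$, and here $\db$ of the splitting is controlled by $\bar\theta$, the $h$-adjoint of $\theta$) has norm $\lesssim\prod|z_i|^{\ep'}$ times the norm of $s$, giving the growth hypothesis $|\eta|_{h_{p+1},\omega_P}\lesssim\prod|z_i|^{\ep}$ needed to invoke \cref{thm:L2} (after a further twist by a negative power of $z_i$ if $s$ itself still has mild negative growth — one arranges the bookkeeping so that the net exponent is positive). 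Then \cref{thm:L2} produces $\tau\in C^\infty(U,F^{p+1})$ with $\db\tau=\eta$ and $\int|\tau|^2_{h_{p+1}}\prod(-\log|z_i|^2)^N\,d\vol_{\omega_P}<\infty$, and \cref{lem:estimate} upgrades this to the pointwise bound $|\tau|_{h_{p+1}}\lesssim\prod|z_i|^{-\ep}$ for every $\ep>0$; hence $\sigma:=\tilde s-\tau$ is a holomorphic section of $F^p$ over $U$ with $|\sigma|_{h_p}\lesssim\prod|z_i|^{-\alpha_i-\ep}$, i.e. a local section of $\cP_{\bm\alpha}F^p$, mapping to $s$. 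Reversing the initial twist by $\prod z_i^{k_i}$ completes the surjectivity and hence the exactness of \eqref{eq:exact}. The delicate points to watch are: (i) choosing one admissible coordinate valid for all of \cref{thm:moc}, \cref{lem:acceptable}, \cref{thm:L2}, \cref{lem:estimate} at once; (ii) matching the weight $\bm\alpha$ exactly through the twisting, so that the $\ep$-loss in the $L^2$-estimate is absorbed into the definition of $\cP_{\bm\alpha}$ rather than changing the weight; and (iii) verifying that $\db$ of the $C^\infty$-splitting is genuinely governed by $\theta^\dagger$ so that \cref{thm:moc} applies — this uses the decomposition $\nabla=\bar\theta+\partial+\bar\partial+\theta$ and Griffiths transversality from \cref{sec:VHS}.
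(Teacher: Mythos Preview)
Your architecture is exactly the paper's: parabolicity from \cref{lem:acceptable} and \cref{thm:Moc2}; left and middle exactness by direct norm comparison (using $h_{p+1}=h_p|_{F^{p+1}}$ and that $h_{p,m-p}$ is the quotient metric of $h_p$); and surjectivity by solving $\bar\partial_{F^{p+1}}\sigma=-\theta^\dagger_{p+1,m-p-1}s$ through \cref{thm:L2} and \cref{lem:estimate}. The gap is in the weight-matching for surjectivity.

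Note first that \cref{thm:moc} only gives $|\theta|_{h,\omega_P}\le C$, not $\lesssim\prod_j|z_j|^{\ep'}$, so $|\theta^\dagger s|_{h_{p+1},\omega_P}$ inherits the growth of $|s|_{h_{p,m-p}}$ with no extra decay; the strictly positive exponent required by \cref{thm:L2} must come entirely from a twist. Your integer-monomial twist $s\mapsto\prod_i z_i^{k_i}s$ with $k_i>\alpha_i$ does supply that decay, but after running the $L^2$ machine and untwisting you land only in $\cP_{\bm k}F^p$, which is in general strictly larger than $\cP_{\bm\alpha}F^p$ whenever the interval $(\alpha_i,k_i]$ contains a jump of the parabolic filtration; your point~(ii) correctly flags the issue, but the integer-power mechanism cannot close it. The paper's resolution is to invoke the semicontinuity property \cref{semiconti} to choose \emph{real} $\bm\beta$ with $\beta_i>\alpha_i$ yet $\cP_{\bm\beta}F^p=\cP_{\bm\alpha}F^p$, and to twist the \emph{metric} rather than the section, replacing $h_{p+1}$ by $h_{p+1}(\bm\beta):=h_{p+1}\cdot\prod_i|z_i|^{2\beta_i}$. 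Since $\partial\bar\partial\log|z_i|=0$ on $U$, the twisted bundle is still acceptable; because $\beta_i>\alpha_i$ one now has $|\theta^\dagger s|_{h_{p+1}(\bm\beta),\omega_P}\lesssim\prod_j|z_j|^{\delta}$ for some $\delta>0$, and \cref{thm:L2} together with \cref{lem:estimate}, applied with the twisted metrics, give $|\tilde s|_{h_p}\lesssim\prod_j|z_j|^{-\beta_j-\ep}$, hence $\tilde s\in\cP_{\bm\beta}F^p=\cP_{\bm\alpha}F^p$ directly, with no untwisting needed.
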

\begin{proof}	
By \cref{lem:acceptable}, $(F^p, h_p)$ and $(E_{p,m-p}, h_{p,m-p})$ are acceptable bundles for every $p$. 	It follows from \cref{thm:Moc2} that $\cP_{*}F^p$ and $\cP_{*} E_{p,m-p}$    defined in \cref{sec:pro} are parabolic ones.  
	
We first show that	we can define \begin{align} \label{eq:exact2}
	0\to \mathcal{P}_{\! \bm{\alpha}}F^{p+1}\to  \mathcal{P}_{\! \bm{\alpha}}F^p\stackrel{q}{\to}\pa E_{p,m-p},
\end{align}
which is exact.  Note that we have the following exact sequence 
	\begin{align} \label{eq:exact3}
	0\to F^{p+1}\to  F^p\stackrel{q}{\to} E_{p,m-p}\to 0
\end{align}
on $X\backslash D$  by the definition of $\bC$-VHS.  Pick any $x\in D$ and any admissible coordinate $(\Omega;z_1,\ldots,z_n)$ centering at $x$ such that $D\cap \Omega=(z_1\cdots z_k=0)$. By the   prolongation via norm growth defined in \cref{sec:pro}, any section $s\in \pa F^{p+1}(\Omega)$ satisfies that $s\in F^{p+1}(\Omega\backslash D)$ and $|s|_{h^{p+1}}\lesssim \prod_{i=1}^{k}|z_i|^{-\alpha_i-\ep}$ for any $\ep>0$.  Since $h_{p+1}$ is the induced metric of $h_p$ on $F^{p+1}$, it follows that  
\begin{align}\label{eq:same norm}
	 |s|_{h_{p+1}}=|s|_{h_p} \lesssim \prod_{i=1}^{k}|z_i|^{-\alpha_i-\ep} 
\end{align}
 for any $\ep>0$.  Hence the inclusion $F^{p+1}\subset F^p$ also results in the inclusion  $\pa F^{p+1}\subset \pa F^{p}$.  We proved that \eqref{eq:exact2} is exact in the left. 

Note that the metric $h_{p,m-p}$ on $E_{p,m-p}$ is the quotient metric of $h_p$. It follows that  for any section $s\in \pa F^{p}(\Omega)$ , we have
$$
|q(s)|_{h_{p,m-p}}\leq |s|_{h_p}\lesssim \prod_{i=1}^{k}|z_i|^{-\alpha_i-\ep}$$ for any $\ep>0$.   
Hence the quotient $q:F^p\to E_{p,m-p}$  induces  the morphism  $\mathcal{P}_{\! \bm{\alpha}}F^p\to\pa E_{p,m-p}$ and thus \eqref{eq:exact2} can be defined.  
 Next we will show that  \eqref{eq:exact2} is exact in the middle. 
 
 Take any section $s\in \pa F^{p}(\Omega)$ such that $q(s)=0$.  Thanks to the exactness in  \eqref{eq:exact3}, we have $s\in F^{p+1}(\Omega\backslash D)$. By \eqref{eq:same norm}, we conclude that $s\in \pa F^{p+1}(\Omega)$. This implies  the exactness of  \eqref{eq:exact2}.

\medspace

 In what follows we will prove that \eqref{eq:exact2} is exact on the right. It suffices to prove that for any point $x\in D$ and any section $s\in  \pa E_{p,m-p}(\Omega)$ where $\Omega$ is a neighborhood of $x$, there is a section $\tilde{s}\in  \mathcal{P}_{\! \bm{\alpha}}F^p(\Omega')$ for some smaller neighborhood $\Omega'$ of $x$ such that $q(\tilde{s})=s|_{\Omega'}$. We shall construct such $\tilde{s}$ by utilizing the previous results on $L^2$-estimate in   \cref{thm:L2,lem:estimate}.
	
	Since this is a local problem, we can assume that $X=\Delta^n$, $D=(z_1\cdots z_\ell=0)$ and $x$ is the origin. We equip the complement $U:=X\backslash D$   with the Poincar\'e metric   $\omega_P$. Let $X(r)$ and $U(r)$ be defined as in \eqref{eq:Xr}. By the semicontinuity of the parabolic bundle in \cref{semiconti}, we can  choose $\bm{\beta}\in \bR^\ell$ such that $\beta_i>\alpha_i$ and 
\begin{align} \label{continuous}
 \mathcal{P}_{\! \bm{\beta}}F^{p}=\pa F^{p}.
\end{align}
Pick any $s\in \pa E_{p,m-p}(X)$. Then $s\in H^0(U, E_{p,m-p})$ with $|s|_{h_{p,m-p}}\lesssim \prod_{i=1}^{\ell}|z_i|^{-\alpha_i-\ep}$ for any $\ep>0$. We will construct a section  $\tilde{s}\in H^0(U(r), F^p)$ for some $0<r<1$ such that $q(\tilde{s})=s|_{U(r)}$ and $|\tilde{s}|_{h_p}\lesssim \prod_{i=1}^{\ell}|z_i|^{-\beta_i-\ep}$ for any $\ep>0$.  Note that there is a  canonical smooth  isomorphism (and isometry)
$$
\Phi: (F^p, h_p)\to (F^{p+1}, h_{p+1})\oplus (E_{p,m-p}, h_{p,m-p})
$$
such that   the holomorphic structure of $F^p$ via $\Phi$ is defined by 
$$
\begin{bmatrix}
\db_{F^{p+1}} & \theta_{p+1,m-p-1}^\dagger  \\
	0 &  \db_{E_{p,m-p}} 
\end{bmatrix},
$$
where $ \theta_{p+1,m-p-1}^\dagger$ is the adjoint of $\theta_{p+1,m-p-1}$ with respect to $h_{p+1,m-p-1}$. 
If $q(\tilde{s})=s$, then $\Phi(\tilde{s})=[\sigma, s]$ for some $\sigma\in \sC^\infty(U, F^{p+1})$ such that
$$
\begin{bmatrix}
	\db_{F^{p+1}} & \theta_{p+1,m-p-1}^\dagger  \\ 
	0 &  \db_{E_{p,m-p}} 
\end{bmatrix} \begin{bmatrix}
\sigma   \\ 
s 
\end{bmatrix}=0
$$
	 Hence $\db_{F^{p+1}}\sigma=-\theta_{p+1,m-p-1}^\dagger s$. We will  solve this $\db$-equation with proper norm estimate.

By \cref{thm:moc},  after we  replace $U$ by $U(r)$ for some $0<r<1$, we have $|\theta_{p+1,m-p-1}|_{h,\omega_P}\leq C$ over $U$.  This implies that  $|\theta_{p+1,m-p-1}^\dagger|_{h,\omega_P}\leq C$ over $U$. Hence, 
	 \begin{align*}
	 	|\theta_{p+1,m-p-1}^\dagger s|_{h_{p+1},\omega_P}  \leq 	|\theta_{p+1,m-p-1}^\dagger|_{h, \omega_P}\cdot  |s|_{h_{p,m-p}} \lesssim \prod_{i=1}^{\ell}|z_i|^{-\alpha_i-\ep}
	 \end{align*}
	  for any $\ep>0$.  We now introduce a new metric  for $F^{p}$ defined by 
	  $$h_{p}(\bm{\beta}):=h_{p}\cdot \prod_{i=1}^{\ell}|z_i|^{\beta_i}.$$ 
Since $\beta_i>\alpha_i$ for each $i$, we have  
	$$  |\theta_{p+1,m-p-1}^\dagger s|_{h_{p+1}(\bm{\beta}),\omega_P}  \lesssim   \prod_{j=1}^{\ell}|z_j|^{\delta}$$ for some $\delta>0$.
	  Note that $\theta_{p+1,m-p-1}^\dagger s\in A^{0,1}(E_{p+1,m-p-1})$. We have
\begin{align*}
	\db_{F^{p+1}}(\theta_{p+1,m-p-1}^\dagger s)&=(\db_{E_{p+1,m-p-1}}+\theta_{p,m-p}^\dagger)(\theta_{p+1,m-p-1}^\dagger s)\\
&	= \db_{E_{p+1,m-p-1}} (\theta_{p+1,m-p-1}^\dagger s)\\
&=	(D_{h}^{0,1}\theta_{p+1,m-p-1}^\dagger)s-\theta_{p+1,m-p-1}^\dagger(\db_{E_{p,m-p} }s)=0,
\end{align*}
where the second  equality follows from $\theta_{p+1,m-p-1}^\dagger\wedge\theta_{m-p}^\dagger=0$, and the last one follows from $D_h^{0,1}(\theta^\dagger)=0$.  Here $D_h$ is the Chern connection for the Hodge bundle $(E=\oplus_{p+q=m}E_{p,q},h)$.  
Since $(F^{p+1}, h_{p+1}(\bm{\beta}))$ is also acceptable by \cref{lem:acceptable}, we can invoke \cref{thm:L2} to find some  $\sigma\in \sC^\infty(U, F^{p+1})$ such that 
$$
	\db_{F^{p+1}}(\sigma)=-\theta_{p+1,m-p-1}^\dagger s
$$
and \begin{align*}
	\int_{U} |\sigma|^2_{h_{p+1}(\bm{\beta},N)} d\mbox{vol}_{\omega_P}<\infty 
\end{align*}
for some $N\gg 1$.  Here $h_{p+1}(\bm{\beta},N)$ is a new metric for $F^{p+1}$ define by
$$
h_{p+1}(\bm{\beta},N)=h_{p+1}\cdot \prod_{i=1}^{\ell}|z_i|^{\beta_i}\cdot e^{-N\varphi}
$$
with $\varphi$ defined in \eqref{eq:potent}.  Since $|s|_{h_{p,m-p}}\lesssim \prod_{i=1}^{\ell}|z_i|^{-\alpha_i-\ep}$ for any $\ep>0$, it follows that  
 $  |s|_{h_{p,m-p}(\bm{\beta},N)}<C$ 
for some constant $C>0$,  where we define $$
h_{p,m-p}(\bm{\beta},N)=h_{p,m-p}\cdot \prod_{i=1}^{\ell}|z_i|^{\beta_i}\cdot e^{-N\varphi}.
$$
Thus  the section $\tilde{s}:=\Phi^{-1}([\sigma,s])$ is a holomorphic section of $F^p$ such that
 \begin{align*}
	\int_{U} |\tilde{s}|^2_{h_{p}(\bm{\beta},N)} d\mbox{vol}_{\omega_P}=\int_{U} |\sigma|^2_{h_{p+1}(\bm{\beta},N)} d\mbox{vol}_{\omega_P}+\int_{U} |s|^2_{h_{p,m-p}(\bm{\beta},N)} d\mbox{vol}_{\omega_P}<\infty 
\end{align*}
	  Since $(F^p, h_p(\bm{\beta}))$ is also acceptable by \cref{lem:acceptable}, thanks to \cref{lem:estimate},  
	 over some $U(r) $ for $0<r<1$  we have
$|\tilde{s}|_{h_p(\bm{\beta})}\lesssim   \prod_{j=1}^{\ell}|z_j|^{-\ep}$ for any $\ep>0$. Therefore, 
 $|\tilde{s}|_{h_p}\lesssim   \prod_{j=1}^{\ell}|z_j|^{-\beta_j-\ep}$ for any $\ep>0$. It follows that 
 $
 \tilde{s}\in \cP_{\bm{\beta}} F^p(X(r)).
 $ 
 By \eqref{continuous} we conclude that  $\tilde{s}\in \cP_{\bm{\alpha}} F^p(X(r'))$ for some   $0<r'<1$.    This implies the right exactness of \eqref{eq:exact} as  $q(\tilde{s})=s$. The theorem is proved. 
 \end{proof} 
Let us prove \cref{main}. 
\begin{proof}[Proof of \cref{main}]
Thanks to  \cref{lem:equal}, we have $V^{\rm Del}_{\bm{\alpha}}=\pa V$.  
    By \cref{lem:acceptable}, $(F^p, h_p)$ and $(E_{p,m-p}, h_{p,m-p})$ are acceptable bundles for any $p$. It follows from \cref{thm:Moc2} that the induced filtered bundle $\cP_{*}F^p$ and $\cP_{*} E_{p,m-p} $  defined in \cref{sec:pro} are parabolic ones. In particular, $\pa F^p$ and $\pa E_{p,m-p} $ are locally free sheaves.  Denote by  $j:X\backslash D\to X$  the inclusion map.  Note that  \begin{align}\label{eq:diff}
	\pa F^p=j_*(F^p)\cap \pa V\stackrel{\cref{lem:equal}}{=} j_*F^p\cap V_{\bm{\alpha}}^{\rm Del}=:F^p_{\bm{\alpha}}. 
\end{align}
Hence the exact sequence \eqref{eq:exact} in \cref{thm:exact} implies the following one
$$
0\to F^{p+1}_{\bm{\alpha}}\to F^p_{\bm{\alpha}}\to \pa E_{p,m-p}\to 0.
$$
In particular,  $F^p_{\bm{\alpha}}/F^{p+1}_{\bm{\alpha}}$ is isomorphic to $\pa E_{p,m-p}$, which is thus locally free.  The theorem is proved. 
\end{proof}
\subsection{On the nilpotent orbit theorem}
 In this subsection we apply \cref{main} to prove \cref{corx} following closely Schmid's original method \cite[p. 288-289]{Sch73}. 	  We will use the notations and conventions in \cref{sec:period}. 
 
 Let $(V,\nabla,F^\bullet,Q)$ be a complex polarized variation of Hodge structure on $(\Delta^*)^p\times \Delta^q$. Denote by $\Phi:\bH^{p}\times \Delta^q\to \sD$ its period mapping, where we set
 \begin{align*}
 \bH^{p}\times \Delta^q&\to\Delta^n\\
 (z,w)&\mapsto (e^{z_1},\ldots,e^{z_p},w)
 \end{align*} 
to be the uniformizing map. 
 Let $T_j$ be the 
 monodromy transformation   defined in \cref{sec:Del}. For some fixed $\bm{\alpha}\in \bR^p$,  there exist $S_i, N_i\in \End(V^\nabla)$ such that
 \begin{itemize}
 	\item $T_i=\exp (2\pi i (S_i+N_i))$;
 	\item $[S_i,S_j]=0$, $[S_i,N_j]=0$, and $[N_i, N_j]=0$;
 	\item $S_i$ is semisimple whose eigenvalues lying in $(\alpha_i-1,\alpha_i]$ and $N_i$ is nilpotent. 
 \end{itemize}
Let us define $$
\tilde{\Psi}(z,w):=\exp(\sum_{i=1}^{p}(S_i+N_i)z_i)\Phi(z,w),
$$
which satisfies $\tilde{\Psi}(z_1,\ldots,z_i+2\pi i,\ldots, z_p,w)=\tilde{\Psi}(z,w)$ for $i=1,\ldots,p$.   It  thus descends to a single valued  map  
   $\Psi:(\Delta^*)^p\times \Delta^q\to \check{\sD}$ such that $\Psi(e^{z_1},\ldots,e^{z_p},w)=\tilde{\Psi}(z,w)$. 
   \begin{lem}
   The twisted holomorphic map	 $\Psi $  extends holomorphically to $\Delta^{p+q}$.
   \end{lem}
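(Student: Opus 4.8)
The plan is to reduce the statement to a boundedness/growth estimate for the matrix entries of $\Psi$ with respect to a fixed holomorphic frame, and then to invoke the Riemann removable singularity theorem. Recall that $\check{\sD}$ is a projective manifold, realized as a closed submanifold of a product of Grassmannians; a point of $\check{\sD}$ is a flag $F^\bullet$ in the reference space $V^\nabla$. Thus $\Psi:(\Delta^*)^p\times\Delta^q\to\check{\sD}$ is given by the tuple of subspaces $\Psi(z,w)^k\subset V^\nabla\otimes\cO$, and to show $\Psi$ extends holomorphically across $D=(z_1\cdots z_p=0)$ it suffices to show that each of these moving subspaces extends as a holomorphic subbundle of the trivial bundle $V^\nabla\otimes\cO_{\Delta^{p+q}}$, equivalently that the corresponding Plücker coordinates (holomorphic functions on $(\Delta^*)^p\times\Delta^q$) extend holomorphically; by Riemann extension this follows once we know they are \emph{bounded} near $D$, since they are automatically holomorphic on the punctured polydisc.

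The key computation is to relate $\Psi$ to the Deligne extension. By construction $\tilde\Psi(z,w)=\exp(\sum_i(S_i+N_i)z_i)\Phi(z,w)$, and $\Phi(z,w)^k=F^k_{(z,w)}$ is exactly the Hodge filtration of the $\bC$-VHS pulled back to the universal cover. Unwinding the definition of the Deligne extension in \cref{sec:Del}: writing the uniformizing map $t_i=e^{z_i}$, so $z_i=\log t_i$, the twist $\exp(-\sum_i(S_i+N_i)\log t_i)$ applied to a multivalued flat section is precisely what produces a single-valued holomorphic section generating $V^{\rm Del}_{\bm\alpha}$ (here $S_i$ plays the role of the semisimple part with eigenvalues $\beta_i\in(\alpha_i-1,\alpha_i]$, and $N_i$ the nilpotent part). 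Hence $\Psi(t,w)^k$, viewed as a subspace of $V^\nabla\otimes\cO$, is identified — via the flat trivialization on the punctured polydisc transported to the Deligne frame — with the subsheaf $F^k_{\bm\alpha}=j_*F^k\cap V^{\rm Del}_{\bm\alpha}$ of $V^{\rm Del}_{\bm\alpha}$. But \cref{main} (applied with $X=\Delta^{p+q}$, $D=(z_1\cdots z_p=0)$) tells us exactly that $F^k_{\bm\alpha}$ is a \emph{locally free} subsheaf of the locally free sheaf $V^{\rm Del}_{\bm\alpha}$, and moreover (by the exact sequence in \cref{thm:exact}, or directly from the locally-freeness of $F^k_{\bm\alpha}/F^{k+1}_{\bm\alpha}$) that the inclusion $F^k_{\bm\alpha}\hookrightarrow V^{\rm Del}_{\bm\alpha}$ is a subbundle inclusion, i.e. has locally free quotient. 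A subbundle of a holomorphic vector bundle over $\Delta^{p+q}$ is, after trivializing, a holomorphic map into the relevant Grassmannian defined on all of $\Delta^{p+q}$; restricting to the punctured polydisc it must agree with $\Psi^k$. Therefore $\Psi$ itself extends holomorphically to $\Delta^{p+q}$.

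I expect the main obstacle to be the bookkeeping in the second step: carefully matching the group-theoretic twist $\exp(\sum_i(S_i+N_i)z_i)$ with the basis-dependent construction of $V^{\rm Del}_{\bm\alpha}$ in \eqref{eq:twist}, in particular checking that the semisimple part $S_i$ here, acting on $V^\nabla$ with eigenvalues in $(\alpha_i-1,\alpha_i]$, corresponds to the $\beta_i I$ appearing in the Deligne twist on each eigenspace $\bE_{\bm\lambda}$, and that the flat frame used to identify $\Phi$'s target with $V^\nabla$ is the same one used to define the Deligne extension. Once this identification $\Psi^k\simeq F^k_{\bm\alpha}$ is made precise, the extension is immediate from \cref{main}: the point is simply that "$F^k_{\bm\alpha}$ is a holomorphic subbundle of $V^{\rm Del}_{\bm\alpha}$ on the full polydisc'' is the coordinate-free content of "$\Psi$ extends holomorphically.'' A minor auxiliary point to verify is that the polarization does not interfere — it does not, since the extension statement takes place in $\check{\sD}$, not $\sD$, so only the flag (filtration) data matters.
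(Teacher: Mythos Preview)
Your proposal is correct and follows essentially the same approach as the paper: identify the group-theoretic twist $\exp(\sum_i(S_i+N_i)z_i)$ with the Deligne-extension twist \eqref{eq:twist}, so that under the trivialization $V^\nabla\otimes_{\bC}\cO_{\Delta^{p+q}}\simeq V^{\rm Del}_{\bm\alpha}$ the filtration $\Psi^k$ becomes $F^k_{\bm\alpha}$, and then invoke \cref{main} to conclude that each $F^k_{\bm\alpha}$ is a subbundle with locally free quotient, hence gives a well-defined holomorphic map to the flag manifold over all of $\Delta^{p+q}$. Your initial framing via Pl\"ucker coordinates and Riemann extension is not needed once you pass to the subbundle argument, and the paper omits it; the bookkeeping concern you flag (matching $S_i$ with the $\beta_i I$ on each $\bE_{\bm\lambda}$) is exactly what the paper glosses as ``under such trivialization the Hodge filtration becomes $\Psi$.''
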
 
\begin{proof}
We use the notations in \cref{sec:Del}.  We fix a basis $v_1,\ldots,v_r$ of $V^\nabla$ such that each $v_i$ belongs to some $\bE_{\bm{\lambda}}$. Then the  sections $\tilde{v}_1,\ldots,\tilde{v}_r$ defined in \eqref{eq:twist} induces a trivialization 
$$
V^\nabla\otimes_\bC  \Delta^{p+q}\simeq V^{\rm Del}_{\bm{\alpha}},
$$ 
where $V^{\rm Del}_{\bm{\alpha}}$ is the Deligne extension. Under such trivialization, the Hodge filtration $F^\bullet_{\bm{\alpha}}(t_1,\ldots,t_{p+1})$   becomes    $\Psi(t_1,\ldots,t_{p+1})$.   Thanks to \cref{main},  the Hodge filtration $F^\bullet_{\bm{\alpha}}$ extends to locally free sheaves over $\Delta^{p+q}$ such that $F^p_{\bm{\alpha}}/F^{p+1}_{\bm{\alpha}}$ is also  locally free. Therefore, $\Psi$ extends holomorphic maps over $\Delta^{p+q}$. 
\end{proof}
This lemma thus proves  \cref{extension}. We write $a(w):=\Psi(0,w)$.  In general it does not lie in $\sD$.  
 
 The following well-known result follows from the fact that $GL(V^\nabla)$ acts transitively on $\check{\sD}$. 
 \begin{lem}\label{lem:left}
For any $g\in GL(V^\nabla)$, consider the left translation $L_g:\check{\sD}\to \check{\sD}$ with $L_g(F):=g  F$.  Then 
\begin{align*}
	(L_g)_*: T_{\check{\sD},F}^{-1,1}\stackrel{\sim}{\to} T_{\check{\sD},g  F}^{-1,1}.
\end{align*} \qed
 \end{lem}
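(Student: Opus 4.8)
The plan is to make the differential of $L_g$ explicit and to observe that the horizontality condition is manifestly invariant under the $GL(V^\nabla)$-action. Write $V:=V^\nabla$, and for $g\in GL(V)$ let $\Ad(g)\colon\End(V)\to\End(V)$ be $B\mapsto gBg^{-1}$. First I would record that $L_g$ is a biholomorphism of $\check{\sD}$ with inverse $L_{g^{-1}}$: it is the restriction to $\check{\sD}$ of the linear $GL(V)$-action on the product of Grassmannians in which $\check{\sD}$ embeds, this action is algebraic hence holomorphic, and it preserves the dimension and incidence conditions cutting out $\check{\sD}$. Consequently $(L_g)_*\colon T_{\check{\sD},F}\stackrel{\sim}{\to}T_{\check{\sD},gF}$ is a linear isomorphism for every $F$, and it only remains to identify it and to check that it maps $T^{-1,1}_{\check{\sD},F}$ onto $T^{-1,1}_{\check{\sD},gF}$.

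The key step is the identification $(L_g)_*[B]_F=[\Ad(g)B]_{gF}$, using the description $T_{\check{\sD},F}\cong\End(V)/\{B:B(F^p)\subseteq F^p\ \forall p\}$ recalled in \cref{sec:period}. Given $B\in\End(V)$, the curve $t\mapsto e^{tB}\cdot F$ (meaning $p\mapsto e^{tB}(F^p)$) passes through $F$ at $t=0$ with velocity $[B]_F$; applying $L_g$ gives $L_g(e^{tB}\cdot F)^p=g\,e^{tB}(F^p)=(g\,e^{tB}g^{-1})(gF^p)=e^{t\,\Ad(g)B}\cdot(gF)^p$, whose velocity at $t=0$ is $[\Ad(g)B]_{gF}$. (This is consistent with the fact that $\Ad(g)$ carries $\{B:B(F^p)\subseteq F^p\}$ onto the corresponding stabilizer subalgebra at $gF$, so it descends to the tangent quotients.)

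Finally, $[B]_F\in T^{-1,1}_{\check{\sD},F}$ means $B(F^p)\subseteq F^{p-1}$ for all $p$; for $B'=\Ad(g)B$ one computes $B'((gF)^p)=gBg^{-1}(gF^p)=gB(F^p)$, so $B'((gF)^p)\subseteq(gF)^{p-1}=gF^{p-1}$ holds if and only if $B(F^p)\subseteq F^{p-1}$. Hence $(L_g)_*$ restricts to a linear map $T^{-1,1}_{\check{\sD},F}\to T^{-1,1}_{\check{\sD},gF}$, and applying the same to $g^{-1}$ at the point $gF$ produces a two-sided inverse, giving the asserted isomorphism. I do not expect a genuine obstacle here: the only point needing a little care is the passage from the $GL(V)$-action on $\check{\sD}$ to its derivative, i.e.\ verifying that $(L_g)_*$ really is the map induced by $\Ad(g)$; once that is in hand, the $GL(V)$-invariance of the condition \emph{$B$ lowers the filtration by one step} is immediate, and this invariance is precisely the \emph{well-known} statement being asserted.
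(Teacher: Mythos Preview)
Your proof is correct and complete. The paper does not actually prove this lemma: it is stated with a \qed and the only justification given is the remark that it ``follows from the fact that $GL(V^\nabla)$ acts transitively on $\check{\sD}$.'' Your direct computation of $(L_g)_*$ via $\Ad(g)$ and the verification that the condition $B(F^p)\subseteq F^{p-1}$ is $\Ad(g)$-invariant is exactly the standard argument underlying that remark, so there is nothing to compare.
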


 Recall that for any $A\in \End(V^\nabla)$ and any $F\in \check{\sD}$, we denote by $[A]_F$ the image of $A$ under the natural map $\End(V^\nabla)\to T_{\check{\sD},F}$. 
\begin{lem}\label{lem:hor}
For each  $i=1,\ldots,p$, 	  $[S_i+N_i]_{a(w)}\subset T_{\check{\sD},a(w)}^{-1,1}$.   
\end{lem}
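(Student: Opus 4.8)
The plan is to reduce the horizontality of $[S_i+N_i]_{a(w)}$ to the horizontality of the period map $\Phi$ near a boundary point, exploiting the fact that $a(w)$ is a limit of $\Phi$-values along a degenerating direction. First I would unwind definitions: by Griffiths transversality, $\Phi:\bH^p\times\Delta^q\to\sD$ is horizontal, so $d\Phi(\partial_{z_i})\in T^{-1,1}_{\check\sD,\Phi(z,w)}$ for each $i$. Since $\tilde\Psi(z,w)=\exp(\sum_j(S_j+N_j)z_j)\cdot\Phi(z,w)$, differentiating in $z_i$ and using \cref{lem:left} (that left translation by $\exp(\sum_j(S_j+N_j)z_j)\in GL(V^\nabla)$ preserves the horizontal subbundle) gives
\begin{align*}
d\tilde\Psi(\partial_{z_i})=\big(L_{\exp(\sum_j(S_j+N_j)z_j)}\big)_*\Big([S_i+N_i]_{\Phi(z,w)}+d\Phi(\partial_{z_i})\Big),
\end{align*}
so $d\tilde\Psi(\partial_{z_i})$ is horizontal at $\tilde\Psi(z,w)$ if and only if $[S_i+N_i]_{\Phi(z,w)}$ is, because the $d\Phi(\partial_{z_i})$ term already lies in $T^{-1,1}$. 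But $\tilde\Psi$ is, up to the uniformizing coordinate change $t_i=e^{z_i}$, the holomorphic map $\Psi$ on $(\Delta^*)^p\times\Delta^q$, which by the previous lemma extends holomorphically across $D$. The horizontal subbundle $T^{-1,1}_{\check\sD}\subset T_{\check\sD}$ is a holomorphic subbundle of the tangent bundle of the projective manifold $\check\sD$ (as recalled in \cref{sec:period}), hence the condition "$d\Psi$ factors through $\Psi^*T^{-1,1}_{\check\sD}$" is a closed condition; since it holds on the dense open set $(\Delta^*)^p\times\Delta^q$, it holds on all of $\Delta^{p+q}$, in particular at the point $(0,w)$ where $\Psi(0,w)=a(w)$.

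Next I would translate this back. At $t=(0,w)$, in the coordinate $z_i$ (i.e.\ pulling back by $t_i=e^{z_i}$, whose differential $\partial_{z_i}=t_i\partial_{t_i}$ degenerates as $t_i\to0$) one has to be slightly careful, so instead I would argue directly in the $t$-coordinates: $d\Psi(\partial_{t_i})\in T^{-1,1}_{\check\sD,\Psi(t)}$ for all $t\in\Delta^{p+q}$, and at $t=(0,w)$ this, combined with the structure of $\tilde\Psi$, isolates the term $[S_i+N_i]_{a(w)}$. Concretely, since $\exp(\sum_j(S_j+N_j)z_j)$ reduces to $\ide$ when all $z_j$ "are $0$" only in the Deligne-trivialized picture, the cleanest route is: fix $w$, restrict to the one-parameter family $z\mapsto(z,0,\dots,0,w)$ with only $z_i$ varying, and observe that $\Phi(z,w)\to a(w)$ in $\check\sD$ as $\Re z_i\to-\infty$ (this uses the holomorphic extension of $\Psi$ across $t_i=0$, which forces $\exp((S_i+N_i)z_i)\Phi\to a(w)$). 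By continuity of the holomorphic subbundle $T^{-1,1}$ and of $(L_g)_*$, taking the limit of the identity displayed above along $\Re z_i\to-\infty$ yields $[S_i+N_i]_{a(w)}+\lim d\Phi(\partial_{z_i})\in T^{-1,1}_{\check\sD,a(w)}$; and the term $\lim d\Phi(\partial_{z_i})$ is itself horizontal (being a limit of horizontal vectors, using again that $T^{-1,1}$ is a closed subbundle), so $[S_i+N_i]_{a(w)}\in T^{-1,1}_{\check\sD,a(w)}$.

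The main obstacle I anticipate is making the limiting argument rigorous: one must verify that the individual pieces — the $GL$-translation factor $\exp(\sum_j(S_j+N_j)z_j)$ acting on $\check\sD$ and the derivative $d\Phi(\partial_{z_i})$ — have well-defined limiting behavior as $\Re z_i\to-\infty$, or else circumvent this by working entirely with the holomorphically extended map $\Psi$ on $\Delta^{p+q}$ and never taking a degenerate limit at all. I expect the second approach is cleaner: one simply notes that $[S_i+N_i]_{\Psi(t)}$ equals $d\Psi(\partial_{z_i})$ minus the (horizontal) pushforward of $d\Phi(\partial_{z_i})$, interpreted via the Deligne trivialization $V^\nabla\otimes\cO\simeq V^{\rm Del}_{\bm\alpha}$ in which $\Psi$ is literally the extended period matrix and in which the $S_i+N_i$ act as constant endomorphisms; horizontality of $d\Psi(\partial_{z_i})$ on the open part extends to $t_i=0$ by the closedness of $T^{-1,1}$, and the error term is manifestly horizontal there, giving the claim. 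The only real content beyond bookkeeping is the holomorphic extension of $\Psi$ (already proved using \cref{main}) and the fact that $T^{-1,1}_{\check\sD}$ is a holomorphic — hence closed — subbundle of $T_{\check\sD}$.
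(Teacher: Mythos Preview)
Your plan has the right skeleton—differentiate $\tilde\Psi$, separate the $[S_i+N_i]$ piece from the horizontal $d\Phi$ piece, then pass to a limit—but it rests on an assertion that is never justified and is circular. You repeatedly claim that $d\Psi$ (equivalently $d\tilde\Psi(\partial_{z_i})$) is horizontal on the open set $(\Delta^*)^p\times\Delta^q$, so that closedness of $T^{-1,1}_{\check\sD}$ then pushes horizontality to the boundary. But your own displayed identity shows that $d\tilde\Psi(\partial_{z_i})$ is horizontal at $\tilde\Psi(z,w)$ \emph{if and only if} $[S_i+N_i]_{\Phi(z,w)}$ is; the latter is not given, and is exactly the kind of statement you are trying to prove. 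Griffiths transversality is a condition on $\nabla$, not on the ordinary derivative in the Deligne trivialization, and the two differ precisely by the $R_i\,dt_i/t_i$ terms whose horizontality is at issue. So the extension-by-closedness step in your first and third paragraphs has nothing established on the open part to extend.

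The observation you are missing, and which is the whole content of the paper's argument, is that
\[
\tilde\Psi_*(\partial_{z_i})\;=\;e^{z_i}\,\Psi_*(\partial_{t_i})\;\longrightarrow\;0
\]
as $\Re z_i\to-\infty$ (not merely to something horizontal), because $\Psi$ extends holomorphically across $D$ and hence $\Psi_*(\partial_{t_i})$ stays bounded. Rewriting your identity as
\[
[S_i+N_i]_{\tilde\Psi(z,w)}\;=\;\tilde\Psi_*(\partial_{z_i})\;-\;\big(L_{\exp(\sum_j R_jz_j)}\big)_*\Phi_*(\partial_{z_i})
\]
and letting \emph{all} $\Re z_j\to-\infty$ (so that $\tilde\Psi(z,w)\to a(w)$; this also repairs your one-parameter family, which as written sends only $t_i\to 0$ and lands at $\Psi(1,\dots,0,\dots,1,w)\neq a(w)$) exhibits $[S_i+N_i]_{a(w)}$ as $0$ minus a limit of horizontal vectors, hence horizontal by closedness of $T^{-1,1}_{\check\sD}$. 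You already noticed that $\partial_{z_i}=t_i\partial_{t_i}$ degenerates at $t_i=0$; following that remark to its conclusion is exactly what is needed, and it removes any appeal to horizontality of $\Psi$ on the open part.
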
 
\begin{proof}
Since
$$
\tilde{\Psi}_*(\frac{\d}{\d z_i})(z,w)=[S_i+N_i]_{\tilde{\Psi}(z,w)}+ (L_{\exp(\sum_{i=1}^{p}(S_i+N_i)z_i)})_*\Phi_*(\frac{\d}{\d z_i})(z,w)
$$
$\Phi_*(\frac{\d}{\d z_i})$ is horizontal since $\Phi$ is a horizontal mapping by \cref{sec:period}.  By \cref{lem:left}  $(L_{\exp(\sum_{i=1}^{p}(S_i+N_i)z_i)})_*\Phi_*(\frac{\d}{\d z_i})(z,w)$ is horizontal. 
On the other hand,
$$
\tilde{\Psi}_*(\frac{\d}{\d z_i})(z,w)=\Psi_*(\frac{\d}{\d t_i})(e^{z_1},\ldots,e^{z_p},w)\cdot e^{z_i}
$$
which tends to zero if $\Re z_i\to -\infty$ and $\Re z_j\leq C$ for other  $j$. By the  continuity, this implies that
$$
 [S_i+N_i]_{a(w)}\in T_{\check{\sD},a(w)}^{-1,1}.  
$$
\end{proof}
We are ready to prove \cref{horizontal}. 
 \begin{lem}
The holomorphic mapping \begin{align*}
\vartheta: \bH^{p}\times \Delta^q&\to\check{\sD}\\
(z,w)&\mapsto \exp(-\sum_{i=1}^{p}z_i(S_i+N_i))\circ a(w)
\end{align*}  is horizontal.
 \end{lem}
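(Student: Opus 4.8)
The plan is to reduce the claim to the infinitesimal horizontality statement already established in \cref{lem:hor}, together with the behaviour of horizontal subbundles under the $GL(V^\nabla)$-action recorded in \cref{lem:left}. First I would compute the differential of $\vartheta$. Writing $g(z) := \exp(-\sum_{i=1}^p z_i(S_i+N_i))$, we have $\vartheta(z,w) = L_{g(z)}(a(w))$, so for the coordinate vector fields one gets
\begin{align*}
\vartheta_*\Bigl(\frac{\partial}{\partial z_i}\Bigr)(z,w) &= -\,(L_{g(z)})_*\bigl[S_i+N_i\bigr]_{a(w)},\\
\vartheta_*\Bigl(\frac{\partial}{\partial w_j}\Bigr)(z,w) &= (L_{g(z)})_*\,a_*\Bigl(\frac{\partial}{\partial w_j}\Bigr)(w),
\end{align*}
where the first identity uses that $z\mapsto g(z)$ is the one-parameter-type family generated by the commuting operators $-(S_i+N_i)$, so the $z_i$-derivative of $L_{g(z)}\circ a$ at a point is the image under $(L_{g(z)})_*$ of the infinitesimal action of $-(S_i+N_i)$ at $a(w)$, namely $-[S_i+N_i]_{a(w)}$.

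Next I would invoke the two lemmas. By \cref{lem:hor}, $[S_i+N_i]_{a(w)} \in T^{-1,1}_{\check{\sD},a(w)}$ for every $i$; hence so is its negative. By \cref{lem:left}, $(L_{g(z)})_*$ maps $T^{-1,1}_{\check{\sD},a(w)}$ isomorphically onto $T^{-1,1}_{\check{\sD},\vartheta(z,w)}$, so the vectors $\vartheta_*(\partial/\partial z_i)$ all lie in the horizontal subbundle. For the $w$-directions, $a = \Psi(0,\cdot)$ is the restriction to $\{0\}\times\Delta^q$ of the holomorphic extension $\Psi$; since $\Psi$ agrees on $(\Delta^*)^p\times\Delta^q$ with $\tilde\Psi(z,w) = \exp(\sum_i z_i(S_i+N_i))\Phi(z,w) = L_{\exp(\sum_i z_i(S_i+N_i))}\circ\Phi$, and $\Phi$ is horizontal by \cref{sec:period} while left translations preserve horizontality by \cref{lem:left}, the map $\Psi$ is horizontal on the dense open set $(\Delta^*)^p\times\Delta^q$, hence on all of $\Delta^{p+q}$ by continuity of $d\Psi$ and closedness of the horizontal subbundle. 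In particular $a_*(\partial/\partial w_j)(w) = \Psi_*(\partial/\partial w_j)(0,w) \in T^{-1,1}_{\check{\sD},a(w)}$, and applying $(L_{g(z)})_*$ again shows $\vartheta_*(\partial/\partial w_j) \in T^{-1,1}_{\check{\sD},\vartheta(z,w)}$.

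Combining these, $d\vartheta$ sends every coordinate vector field into $\vartheta^*T^{-1,1}_{\check{\sD}}$, hence factors through it, which is exactly the definition of $\vartheta$ being horizontal. I expect the only genuinely delicate point to be the verification that the horizontality of $\Psi$ (equivalently of $a$) really does extend from $(\Delta^*)^p\times\Delta^q$ across the divisor; this is where one uses that $T^{-1,1}_{\check{\sD}}\subset T_{\check{\sD}}$ is a holomorphic (hence closed) subbundle and that $\Psi$ is already known to be holomorphic on all of $\Delta^{p+q}$ by \cref{extension}, so the condition "$d\Psi$ takes values in $T^{-1,1}$" is a closed condition satisfied on a dense set. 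The rest is bookkeeping with the chain rule and the two cited lemmas.
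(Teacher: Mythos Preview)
Your proposal is correct and follows essentially the same route as the paper: you use \cref{lem:hor} to get horizontality of $[S_i+N_i]_{a(w)}$, \cref{lem:left} to transport it along $L_{g(z)}$, and for the $w$-directions you deduce horizontality of $a_*$ from that of $\Psi$ on the punctured polydisk by a continuity/closedness argument, exactly as the paper does (the paper phrases the limit as $\Re z_i\to -\infty$ rather than ``dense plus closed,'' but these are the same idea). The only cosmetic difference is that you compute $\vartheta_*(\partial/\partial z_i)$ directly as $-(L_{g(z)})_*[S_i+N_i]_{a(w)}$, whereas the paper writes it as $[S_i+N_i]_{\vartheta(z,w)}$ and then pushes back to $a(w)$ via the $\mathrm{Ad}$-invariance coming from commutativity; these are two organizations of the same chain-rule computation.
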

\begin{proof}
Note that	$\vartheta_*(\frac{\d}{\d z_i})=[S_i+N_i]_{\vartheta(z,w)}$.   Since $[S_i,N_i]=0$, one has
$$
 (L_{\exp(\sum_{i=1}^{p}(S_i+N_i)z_i)})_*\big([S_i+N_i]_{\vartheta(z,w)}\big)=\Big[\Ad_{\exp(\sum_{i=1}^{p}(S_i+N_i)z_i)}(S_i+N_i)\Big]_{a(w)}=[S_i+N_i]_{a(w)}. 
$$
It then follows from \cref{lem:left,lem:hor} that $ [S_i+N_i]_{\vartheta(z,w)}\in T_{\check{\sD},\vartheta(z,w)}^{-1,1}$.  We conclude that $\vartheta_*(\frac{\d}{\d z_i})$ is horizontal. 

On the other hand,  one has 
$$\vartheta_*(\frac{\d}{\d w_i})=(L_{\exp(-\sum_{i=1}^{p}(S_i+N_i)z_i)})_*a_*(\frac{\d}{\d w_i}),$$  and $$
 {\Psi}_*(\frac{\d}{\d w_i})(e^z,w)=\tilde{\Psi}_*(\frac{\d}{\d w_i})(z,w):=(L_{\exp(\sum_{i=1}^{p}(S_i+N_i)z_i)})_*\Phi_*(\frac{\d}{\d w_i})(z,w).
$$
  Since $\Phi_*(\frac{\d}{\d w_i})$ is horizontal,  by \cref{lem:left}, $\tilde{\Psi}_*(\frac{\d}{\d w_i})(z,w):=(L_{\exp(\sum_{i=1}^{p}(S_i+N_i)z_i)})_*\Phi_*(\frac{\d}{\d w_i})(z,w)$ is horizontal, and thus  ${\Psi}_*(\frac{\d}{\d w_i})(e^z,w)$ is horizontal.  Letting $\Re z_i\to -\infty$ for $i=1,\ldots, p$,   we conclude that
$$
 {\Psi}_*(\frac{\d}{\d w_i})(0,w)=a_*(\frac{\d}{\d w_i})
$$ is also horizontal. 
We apply \cref{lem:left} again to conclude that
 $\vartheta_*(\frac{\d}{\d w_i})$ is horizontal. In conclusion, $\vartheta$ is a horizontal mapping. We proved \cref{horizontal}.
\end{proof}

  The rest of the paper is devoted to the proof of  \cref{one var}.  We will only  deal with the case of one variable. We first start a lemma  in linear algebra whose proof is direct (cf. \cite[\S 7.5]{SS22} for a detailed proof).   
 \begin{lem}\label{lem:li}
 Let $S\in \End(V^\nabla)$ be semisimple with real eigenvalues. Then there exists a constant $C>0$ such that
 $$
 \lVert \Ad e^{xS}\rVert\leq C\exp(( \lambda_{\max}-\lambda _{\min})\cdot |x|) \quad \mbox{for all}\quad x\in \bR, 
 $$
 where $\lambda_{\max}$ and $\lambda_{\min}$ are the largest and smallest eigenvalue of $S$. 
 Let $N\in \End(V^\nabla)$ be nilpotent. Then
 $$
 \lVert \Ad e^{xN}\rVert\leq C'|x|^m
 $$
 for some $C',m>0$.
 
 Here we fix a reference polarized Hodge structure $o\in \sD$ which induces  metrics for $V^\nabla$ and $\End(V^\nabla)$.   $\lVert \Ad e^{xS}\rVert$ is the operator norm with respect  to such metric of $\End(V^\nabla)$. \qed
 \end{lem}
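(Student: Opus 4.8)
The plan is to reduce both inequalities to an explicit computation in a single basis adapted to $S$ (resp.\ to $N$), and then to invoke the equivalence of norms on the finite-dimensional space $\End(V^\nabla)$ in order to transfer that computation to the metric coming from the reference Hodge structure $o$.

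First I would treat the semisimple case. Since $S$ is semisimple with real spectrum, choose a basis $e_1,\ldots,e_d$ of $V^\nabla$ with $Se_i=\lambda_i e_i$, $\lambda_i\in\bR$. In the associated basis $\{E_{ij}\}$ of matrix units of $\End(V^\nabla)$ the operator $\operatorname{ad}S$ is diagonal, with eigenvalue $\lambda_i-\lambda_j$ on $E_{ij}$; hence $\Ad e^{xS}=\exp(x\operatorname{ad}S)$ is diagonal with eigenvalues $e^{x(\lambda_i-\lambda_j)}$, and with respect to the metric on $\End(V^\nabla)$ for which $\{E_{ij}\}$ is orthonormal its operator norm is exactly $\max_{i,j}e^{x(\lambda_i-\lambda_j)}=e^{|x|(\lambda_{\max}-\lambda_{\min})}$. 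The metric induced by $o$ differs from this one by a fixed invertible change of coordinates, so the two operator norms of $\Ad e^{xS}$ are comparable up to a constant depending only on $o$, $S$ and the chosen basis; absorbing it into $C$ yields the first estimate.

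Next I would handle the nilpotent case. Let $k$ be minimal with $N^k=0$, so $k\le\dim V^\nabla$, and write $e^{\pm xN}=\sum_{j=0}^{k-1}\frac{(\pm x)^j}{j!}N^j$, a polynomial in $x$ of degree $\le k-1$. Then $\Ad e^{xN}\colon\End(V^\nabla)\to\End(V^\nabla)$, $A\mapsto e^{xN}Ae^{-xN}$, is a polynomial in $x$ of degree $m:=2(k-1)$ whose coefficients are fixed linear operators (finite sums of maps $A\mapsto N^aAN^b$). Estimating termwise gives $\lVert\Ad e^{xN}\rVert\le C'(1+|x|)^m$ for a suitable $C'$; since the lemma will only be applied for $|x|=|\Re z|$ bounded away from $0$, this is bounded by $C''|x|^m$ in that range, which is the asserted form (alternatively one may simply keep the cleaner bound $C'(1+|x|)^m$, valid for all $x$).

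I do not expect a genuine obstacle: both estimates are elementary linear algebra. The only point that deserves a word of care is that the operator norm depends on the chosen inner product on $\End(V^\nabla)$; but because $V^\nabla$ is finite dimensional all such norms are equivalent, and the resulting comparison constants are harmless, since $C$ and $C'$ are permitted to depend on the fixed data $o$, $S$ and $N$.
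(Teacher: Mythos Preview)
Your argument is correct and is exactly the kind of elementary linear-algebra computation the paper has in mind: the paper does not actually prove this lemma but simply declares the proof ``direct'' and refers to \cite[\S 7.5]{SS22}. Your observation that the bound $C'|x|^m$ cannot literally hold near $x=0$ (where $\Ad e^{0}=\mathrm{Id}$) is well taken; the lemma is only invoked in the paper for $x=\Re z$ with $|x|$ large, so your remedy of replacing it by $C'(1+|x|)^m$, or restricting to $|x|\ge 1$, is entirely adequate.
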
 

The following two lemmas are due to Schmid \cite[Lemmas 8.12 \& 8.19]{Sch73}.  They are stated for period domains of real Hodge structures. However, their proofs can be generalized to period domains of complex polarized Hodge structures verbatim, and we thus omit their proofs here. 
\begin{lem}\label{s1}
	If $g\in GL(V^\nabla)$, then for some   natural  distance $d_{\check{\sD}}$  of $\check{\sD}$,  we have
	$$
	d_{\check{\sD}}(ga,gb)\leq \lVert \Ad g \rVert d_{\check{\sD}}(a,b)
	$$
	for any points $a,b\in \check{\sD}$.  \qed
\end{lem}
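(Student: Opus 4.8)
Since the lemma only requires \emph{some} natural distance on $\check{\sD}$, the plan is to take $d_{\check{\sD}}$ to be the geodesic distance of a Riemannian metric that is invariant under the compact group $K:=U(V^\nabla,h_o)\subset GL(V^\nabla)$, where $h_o$ is the positive-definite Hermitian form on $V^\nabla$ coming from the reference polarized Hodge structure $o\in\sD$ fixed in \cref{lem:li}; here the metric on $\End(V^\nabla)$ entering $\lVert\Ad g\rVert$ is the one induced by $o$, a Hilbert--Schmidt-type inner product $\langle A,B\rangle_o$ with the property that $\Ad_k$ is an isometry for every $k\in K$ (immediate from $\Ad_kA=kAk^{-1}$ and $k^{*}=k^{-1}$). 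The key structural fact is that $K$ already acts transitively on the flag manifold $\check{\sD}$, so that $GL(V^\nabla)=K\cdot P$ with $P$ the stabilizer of $o$, and one may build the invariant metric by equipping $T_o\check{\sD}=\End(V^\nabla)/\mathfrak{p}_o$ (with $\mathfrak{p}_o=\{A:A(F_o^p)\subset F_o^p\ \forall p\}$) with the quotient norm of $\langle\cdot,\cdot\rangle_o$ and transporting it around by $K$. Writing $\End(V^\nabla)=\mathfrak{p}_o\oplus\mathfrak{n}^{-}$ with $\mathfrak{n}^{-}:=\bigoplus_{r<0}\End(V^\nabla)^{r,-r}$ for the Hodge decomposition of $\End(V^\nabla)$ at $o$, this splitting is $\langle\cdot,\cdot\rangle_o$-orthogonal, so the quotient norm on $T_o\check{\sD}\cong\mathfrak{n}^{-}$ is $|[A]_o|=|\pi_{\mathfrak{n}^{-}}(A)|$; well-definedness of the transported metric follows from the fact that the isotropy $K\cap P$ preserves $h_o$ and $F_o^\bullet$, hence the Hodge decomposition.

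The proof then reduces to the pointwise bound $|(L_g)_{*}v|\le\lVert\Ad g\rVert\,|v|$ for the differential of the left translation $L_g(F)=gF$ and every tangent vector $v$. Given $v\in T_F\check{\sD}$, write $F=ko$ with $k\in K$ and $v=(L_k)_{*}[A]_o$ with $A\in\mathfrak{n}^{-}$, so $|v|_F=|A|$ by $K$-invariance; then factor $gk=k'p'$ with $k'\in K$, $p'\in P$, so that $gF=k'o$ and $p'=(k')^{-1}gk$. Using the identity $(L_g)_{*}[B]_{F'}=[\Ad_gB]_{gF'}$ (recorded in the proof of \cref{lem:left}), the $K$-invariance of the metric, the fact that $\pi_{\mathfrak{n}^{-}}$ is norm non-increasing, and that $\Ad_k$ and $\Ad_{(k')^{-1}}$ are $\langle\cdot,\cdot\rangle_o$-isometries, one obtains
\begin{align*}
|(L_g)_{*}v|_{gF}&=\bigl|[\Ad_{p'}A]_o\bigr|=\bigl|\pi_{\mathfrak{n}^{-}}(\Ad_{p'}A)\bigr|\le|\Ad_{p'}A|\\
&=\bigl|\Ad_g(\Ad_kA)\bigr|\le\lVert\Ad g\rVert\,|A|=\lVert\Ad g\rVert\,|v|_F.
\end{align*}
Integrating along an arbitrary piecewise-smooth path $\gamma$ from $a$ to $b$ gives $\operatorname{length}(L_g\circ\gamma)\le\lVert\Ad g\rVert\operatorname{length}(\gamma)$, and taking the infimum over such $\gamma$ yields $d_{\check{\sD}}(ga,gb)\le\lVert\Ad g\rVert\,d_{\check{\sD}}(a,b)$.

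The argument is essentially formal; the main point requiring care is the construction of the invariant metric, namely choosing it compatibly with the norm on $\End(V^\nabla)$ used in $\lVert\Ad g\rVert$ and verifying that $\End(V^\nabla)=\mathfrak{p}_o\oplus\mathfrak{n}^{-}$ is orthogonal for $\langle\cdot,\cdot\rangle_o$ — precisely what makes the projection $\pi_{\mathfrak{n}^{-}}$ norm non-increasing. This is Schmid's argument, reproduced for the reader's convenience; see \cite[Lemma 8.12]{Sch73}.
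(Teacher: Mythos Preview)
Your proof is correct and follows exactly Schmid's argument from \cite[Lemma~8.12]{Sch73}, which is precisely what the paper invokes (the paper omits the proof, noting only that Schmid's argument for real Hodge structures carries over verbatim to the complex case). You have simply supplied the details the paper leaves to the reference.
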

\begin{lem}\label{lem:Schmid}
 Let  $\Phi:\bH\to \sD$	 be the period map associated to a complex polarized variation of Hodge structure on $\Delta^*$.  Fix $\alpha,k>0$ and a reference point $o\in \sD$. Choose $g(z)\in G=U(V^\nabla,Q)$ such that $g(z)\cdot o=\Phi(z)$. Then there exist $C,\beta>0$ such that   if $|\Im z|\leq k$,  one has 
	$$
	\lVert \Ad g(z)^{-1}\rVert\leq  C|\Re z|^\beta
	$$
for $\Re z<-\alpha$.  Here	$\lVert \Ad g(z)^{-1}\rVert$ is the operator norm defined in \cref{lem:li}. \qed
\end{lem}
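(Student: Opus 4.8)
The plan is to follow Schmid's argument in \cite[\S 8]{Sch73}, whose two ingredients — the distance-decreasing property of horizontal holomorphic maps into $\sD$, and the geometry of the Poincar\'e metric near the cusp — transfer to complex polarized period domains without change. Fix the reference point $o\in\sD$; it induces a Hodge metric $h$ on $V^\nabla$, hence the operator norm $\lVert\cdot\rVert$ on $\End(V^\nabla)$ appearing in the statement. Let $V^\nabla=V^+\oplus V^-$ be the $Q$-orthogonal splitting into $Q$-positive and $Q$-negative parts determined by $o$, and let $K_{\max}$ be the maximal compact subgroup of $G=U(V^\nabla,Q)$ stabilizing this splitting; since $h=Q|_{V^+}\oplus(-Q)|_{V^-}$, the group $K_{\max}$ (and with it the isotropy $K\subset K_{\max}$ of $o$) preserves $h$, hence acts on $\End(V^\nabla)$ by $\Ad$-isometries. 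Consequently $g\mapsto\lVert\Ad g^{-1}\rVert$ is dominated by a function $\rho$ of the point $g\cdot o\in\sD=G/K$ alone, and the claim becomes an upper bound for $\rho(\Phi(z))$.

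First I would record that $\Phi$ is distance-decreasing up to a constant. Equip $\sD$ with a $G$-invariant Hermitian metric (a Hodge metric); by the Griffiths--Schmid curvature computation its holomorphic sectional curvature along the horizontal subbundle $T_{\check\sD}^{-1,1}$ is bounded above by a negative constant. Since $\Phi\colon\bH\to\sD$ is horizontal (\cref{sec:period}) and $\bH$ carries the complete Poincar\'e metric of constant negative curvature (the pull-back of $\omega_P$ under $z\mapsto e^z$), the Ahlfors--Schwarz lemma gives a constant $c>0$ with $d_{\sD}(\Phi(z),\Phi(z'))\le c\,d_{\bH}(z,z')$ for all $z,z'\in\bH$.

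Next I would estimate the Poincar\'e distance near the cusp and convert the resulting metric bound into an operator-norm bound. Fixing a base point $z_0\in\bH$ and integrating the length element first horizontally from $z_0$ to $\Re z$ and then vertically to $z$ gives
\[
d_{\bH}(z_0,z)\ \le\ A\log|\Re z|+O(1)\qquad\text{for }\ \Re z<-\alpha,\ |\Im z|\le k,
\]
the hypothesis $|\Im z|\le k$ being exactly what keeps the vertical contribution $O(|\Re z|^{-1})$; with the distance-decreasing estimate this yields $d_{\sD}(o,\Phi(z))\le A'\log|\Re z|+O(1)$. Finally I claim that $\rho(x)\le C_0\,e^{c'd_{\sD}(o,x)}$ on $\sD$, with $c'$ depending only on the root system of $G$: writing $g=k\exp(X)$ with $k\in K_{\max}$ and $X$ in the corresponding Cartan complement, one has $\lVert\Ad g^{-1}\rVert=\lVert e^{-\mathrm{ad}(X)}\rVert\le e^{c'|X|}$ (the eigenvalues of $\mathrm{ad}(X)$ are roots evaluated at $X$), while $|X|$ equals the distance from $o$ to $g\cdot o$ in the symmetric space $G/K_{\max}$, hence $|X|\le d_{\sD}(o,g\cdot o)$ since $\sD\to G/K_{\max}$ is a Riemannian submersion; alternatively one subdivides a minimizing geodesic from $o$ to $x$, realizes consecutive points by group elements near the identity, and multiplies the local operator-norm estimates, using the $G$-invariance of $d_{\sD}$ and \cref{s1}. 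Combining the previous bounds,
\[
\lVert\Ad g(z)^{-1}\rVert\ \le\ C_0\,e^{c'A'\log|\Re z|+O(1)}\ =\ C\,|\Re z|^{\beta},\qquad \beta:=c'A',
\]
for $\Re z<-\alpha$ and $|\Im z|\le k$, which is the assertion.

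The main obstacle is the first point: one needs that a \emph{horizontal} holomorphic disc into $\sD$ is distance-decreasing up to a constant — equivalently, the negativity of the horizontal holomorphic sectional curvature of $\sD$ together with the correct form of the Ahlfors--Schwarz comparison. Everything afterwards (the Poincar\'e estimate, and the relation between the $\Ad$-norm and the distance on the homogeneous space $G/K$) is soft. Since this curvature input is exactly Schmid's and passes verbatim to the complex polarized case, it is reasonable to invoke \cite{Sch73} rather than redo it.
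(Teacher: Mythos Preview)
Your sketch is correct and is precisely Schmid's argument from \cite[\S 8]{Sch73}, which the paper also invokes: the paper does not supply its own proof but states that Schmid's proofs of \cite[Lemmas~8.12 and 8.19]{Sch73} carry over verbatim to the complex polarized setting. Your outline of the three steps (distance-decreasing via negative horizontal curvature and Ahlfors--Schwarz, the Poincar\'e-metric estimate $d_{\bH}(z_0,z)\lesssim \log|\Re z|$ on a horizontal strip, and the exponential comparison $\lVert\Ad g^{-1}\rVert\le C_0 e^{c' d_\sD(o,g\cdot o)}$ via the Cartan decomposition) is exactly the mechanism behind that citation.
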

By \cite[\S 4.1]{SS22}, we know that  $G:=U(V^\nabla,Q)$ acts transitively on the period domain $\sD$, and   $\sD$ admits a natural $G$-invariant  distance $d_{\sD}$. 
 
\begin{proof}[Proof of \cref{one var}]
Let $T\in GL(V^\nabla)$ be the monodromy operator associated to the counter-clockwise generator of  $\pi_1(\Delta^*)$.   Note that $T\in G:=U(V^\nabla,Q)$. Recall that there exist commuting $S,N\in GL(V^\nabla)$ such that 
\begin{itemize}
	\item $\exp(2\pi i(S+N))=T$;
	\item $S$ is semisimple with eigenvalues lying in $(\alpha-1,\alpha]$;
	\item $N$ is nilpotent.  
\end{itemize} Denote by $a=\Psi(0)$.  Then  for $|t|$ small enough, one has
$$d_{\check{\sD}}(a, \Psi(t)  )< C|t|\quad \mbox{for some}\ \ C>0,$$
which is equivalent to that
\begin{align}\label{eq:dis}
	d_{\check{\sD}}(a, \Psi(e^z)  )< C e^x 
\end{align}
when $x\leq -M$ for some $M>0$. Here we write $z=x+i y$. 
Assume now $|y|\leq 2\pi$ and $x\leq -M$.   Then 
\begin{align*}
d_{\check{\sD}}(\exp(-(S+N)z)a, \Phi(z))&\leq 
\lVert\Ad \exp((S+N)z)\rVert \cdot  d_{\check{\sD}}(a, \Psi(e^z)  ) \\
&\leq  \lVert\Ad \exp(Nx)\rVert\cdot  \lVert\Ad \exp(i(S+N) y)\rVert\cdot \lVert\Ad \exp(S x)\rVert \cdot  d_{\check{\sD}}(a, \Psi(e^z)  )  \\
&\leq C_1 \lVert\Ad \exp(Nx)\rVert \cdot  \lVert\Ad \exp(S x)\rVert  \cdot  d_{\check{\sD}}(a, \Psi(e^z)  ) \\
&\leq C_2 |x|^{m}\cdot  \exp( (\lambda_{\max}-\lambda_{\min})\cdot|x|)\cdot d_{\check{\sD}}(a, \Psi(e^z)  )\\
&\leq C_3    |x|^{m}\cdot  \exp( (\lambda_{\max}-\lambda_{\min})\cdot|x|)\cdot  e^x 
 \leq C_3 |x|^me^{\delta x}.
\end{align*}
The first inequality is due to \cref{s1}, the third one holds since $|y|\leq 2\pi$, the fourth one follows from \cref{lem:li}, and the fifth one follows from \eqref{eq:dis}.   Here $\lambda_{\max}$ and $\lambda_{\min}$ are the largest and smallest eigenvalue of $S$. Therefore,  $\lambda_{\max}-\lambda_{\min}<1$ and thus the last inequality can be achieved for some $\delta>0$. Here $C_1,\ldots,C_3>0$ are some positive constants.  

Fix a reference point $o\in \sD$ and let $g(z)\in G$ such that $g(z)\cdot o=\Phi(z)$. By \cref{s1,lem:Schmid}  one gets
\begin{align}\label{eq:final}
	d_{\check{\sD}}(g(z)^{-1}\exp(-(S+N)z)a, o)&\leq  \lVert \Ad g(z)^{-1} \rVert\cdot d_{\check{\sD}}(\exp(-(S+N)z)a, \Phi(z))\\\nonumber
	& \leq C_4 |x|^{m+\beta}e^{\delta x}
\end{align}
if $|y|\leq 2\pi$ and $x<-M_2$ for some $M_2\geq M$ and $C_4,\beta>0$.  
Pick a small neighborhood $U$ of $o$ in $\sD$ such that  the distance  functions $d_{\sD}$ and $d_{\check{\sD}}$ are mutually bounded over $U$. By \eqref{eq:final} when $|y|\leq 2\pi$, $x\leq -M_3$ for some $M_3\geq M_2$,  $g(z)^{-1}\exp(-(S+N)z)a$ will be entirely contained in $U$.   Note that $g(z)\in G$, it follows that  $ \exp(-(S+N)z)a\in \sD$  if $|y|\leq 2\pi$ and $x\leq -M_3$. When $|y|>2\pi$ and $x\leq -M_3$, we find some integer $\ell$ such that $|y-2\pi \ell |\leq 2\pi$. Then  $\exp(-(S+N)(z-2\pi i\ell))a\in \sD$. Since $ \exp(-(S+N)z)a=T^{-\ell}\exp(-(S+N)(z-2\pi i\ell))a$ and $T\in G$, it follows that $ \exp(-(S+N)z)a\in \sD$. In conclusion, 
$ \exp(-(S+N)z)a\in \sD$  if  $x\leq -M_3$. We prove the first claim in  \cref{one var}.

 Recall that the distance  functions $d_{\sD}$ and $d_{\check{\sD}}$ are mutually bounded over $U$.  By \eqref{eq:final} again for some $C_5>0$  we have
$$
d_{ {\sD}}(g(z)^{-1}\exp(-(S+N)z)a, o)\leq C_5 |x|^{m+\beta}e^{\delta x}.
$$
for $|y|\leq 2\pi$, $x\leq -M_3$. Since the action of $g(z)$ is $d_{\sD}$-distance invariant, we obtain the distance estimate
$$
d_{ {\sD}}( \exp(-(S+N)z)a, \Phi(z))\leq C_5 |x|^{m+\beta}e^{\delta x}
$$
for $|y|\leq 2\pi$, $x\leq -M_3$.   When $|y|>2\pi$ and $x\leq -M_3$, one picks some integer $\ell$ such that $|y-2\pi \ell |\leq 2\pi$. Then 
$$
d_{ {\sD}}( \exp(-(S+N)(z-2\pi i\ell))a, \Phi(z-2\pi i\ell))\leq C_5 |x|^{m+\beta}e^{\delta x}.
$$
 In other words, 
 $$
 d_{ {\sD}}( T^\ell\exp(-(S+N)z)a, T^\ell\Phi(z))\leq C_5 |x|^{m+\beta}e^{\delta x}.
 $$
 As $T$ is also $d_{\sD}$-distance invariant,  it follows that 
 $$
 d_{ {\sD}}(  \exp(-(S+N)z)a,  \Phi(z))\leq C_5 |x|^{m+\beta}e^{\delta x}.
 $$
 for $x\leq -M_3$. The distance estimate is obtained. 
\end{proof}

\medspace

\noindent{\bf Acknowledgment.} This paper is motivated by a question asked by  Junchao Shentu, to whom I express my gratitude.  I especially thank Christian Schnell for his personal notes and discussions on the nilpotent orbit theorem in 2020 and 2022, and sending me the preprint \cite{SS22}.  I also thank the referees  for their valuable suggestions, which have greatly improved the   paper.  This work is partially supported by the French Agence Nationale de la Recherche (ANR) under reference ANR-21-CE40-0010.

\end{document}